\documentclass[12pt, reqno]{amsart} \usepackage[margin=2.3cm]{geometry}\openup .6em 

\usepackage{amsmath,amssymb,amsfonts,amstext,verbatim,amsthm,mathrsfs}
\usepackage[mathcal]{eucal}
\usepackage[all]{xy}

\theoremstyle{plain}
\newtheorem{thm}{Theorem}[section]

\newtheorem{cor}[thm]{Corollary}
\newtheorem{lemma}[thm]{Lemma}

\newtheorem{defn}[thm]{Definition}
\newtheorem{definition}[thm]{Definition}
\theoremstyle{remark}
\newtheorem{remark}[thm]{Remark}

\makeatletter
\def\makeCal#1{\expandafter\newcommand\csname c#1\endcsname{\mathcal{#1}}}
\def\makeBB#1{\expandafter\newcommand\csname b#1\endcsname{\mathbb{#1}}}
\def\makeFrak#1{\expandafter\newcommand\csname f#1\endcsname{\mathfrak{#1}}}
\count@=0 \loop \advance\count@ 1 \edef\y{\@Alph\count@}
\expandafter\makeCal\y \expandafter\makeBB\y \expandafter\makeFrak\y
\ifnum\count@<26 \repeat

\newcommand{\todo}[1]{}

\newcommand {\id}{\operatorname{id}}
\newcommand{\im}{\operatorname{im}}
\newcommand{\coker}{\operatorname{coker}}

\newcommand {\Hom}{\operatorname{Hom}}

\newcommand{\End}{\operatorname{End}}
\newcommand{\lEnd}{\operatorname{\mathcal{E}nd}}

\newcommand {\<}{\langle}
\renewcommand {\>}{\rangle}
\newcommand{\tensor}{\otimes}
\renewcommand{\O}{\mathscr{O}}
\newcommand{\isom}{\cong}
\newcommand{\mat}[4]{\begin{pmatrix}#1&#2\\#3&#4\end{pmatrix}}
\newcommand{\vect}[2]{\begin{pmatrix}#1\\#2\end{pmatrix}}

\newcommand{\hk}{hyperk{\"a}hler }

\newcommand{\SL}{\operatorname{SL}}
\newcommand{\GL}{\operatorname{GL}}
\newcommand{\can}{\operatorname{can}}
\newcommand{\lra}{\longrightarrow}
\newcommand{\lRa}[1]{\stackrel{#1}{\longrightarrow}}

\newcommand{\GM}{\rm GM}

\newcommand{\hash}{\#}

\renewcommand{\leq}{\leqslant}

\newcommand{\tr}{\operatorname{tr}}
\newcommand{\br}{\operatorname{br}}
\newcommand{\Quad}{\operatorname{Quad}}

\newcommand{\Bun}{\operatorname{Bun}}
\newcommand{\Flat}{\operatorname{Flat}}
\newcommand{\Higgs}{\operatorname{Higgs}}
\newcommand{\Spec}{\operatorname{Spec}}
\newcommand{\Ext}{\operatorname{Ext}}
\newcommand{\Sch}{\operatorname{Sch}}
\newcommand{\Sets}{\operatorname{Sets}}
\newcommand{\PT}{\operatorname{PT}}
\newcommand{\Coh}{\operatorname{Coh}}

\begin{document}

\title{Joyce structures on spaces of quadratic differentials}
\begin{abstract}Consider the space parameterising complex projective curves of genus $g$ equipped with a quadratic differential  with simple zeroes. We use the geometry of isomonodromic deformations to construct a complex \hk structure on the total space of its tangent bundle. This provides non-trivial examples of the Joyce structures  introduced in \cite{RHDT2} in relation to Donaldson-Thomas theory. 
 \end{abstract}

\author{Tom Bridgeland}

\maketitle


\section{Introduction}


Since Hitchin's classic papers  \cite{Hitchin,Hitchin2} moduli spaces of Higgs bundles on algebraic curves have  appeared in many areas of pure mathematics and mathematical physics. Consider for definiteness the space  $\cM_C(E,\Phi)$ parameterising $\SL_2(\bC)$ Higgs bundles  $(E,\Phi)$ on a smooth complex projective curve  $C$ of genus $g>1$. Two of its most important features are (i) a \hk metric,  defined using the non-Abelian Hodge correspondence, 
and (ii) a proper map $\cM_C(E,\Phi)\to H^0(C,\omega_C^{\tensor 2})$, whose target parameterises quadratic differentials on $C$, and 
 whose general fibres  are abelian varieties, homeomorphic to $(S^1)^{6g-6}$.
 
In  this paper we study a  moduli space  $\cM(C,E,\nabla,\Phi)$ which in some respects resembles  a complexification of $\cM_C(E,\Phi)$. For a given integer  $g>1$ it  parameterises the data of a curve $C$ of genus $g$, together with an $\SL_2(\bC)$  bundle $E$  on $C$, equipped with both a flat connection $\nabla$ and a Higgs field $\Phi$.   We construct (i)   a meromorphic complex \hk metric, defined using isomonodromic flows, and (ii) a map  $\cM(C,E,\nabla,\Phi)\to \cM(C,Q)$ whose fibres are algebraic tori  $(\bC^*)^{6g-6}$. The target $\cM(C,Q)$ of this map is the `generic Hitchin base'   parameterising curves $C$ of genus $g$ equipped with a quadratic differential $Q\in H^0(C,\omega_C^{\tensor 2})$ with simple zeroes. 

It is important to note  that the complex \hk metric we construct on $\cM(C,E,\nabla,\Phi)$ is a much simpler object than the \hk metric on  $\cM_C(E,\Phi)$. In particular it is algebraic, 
in contrast to the Hitchin metric which  is  highly transcendental. A striking demonstration of this difference appears on generalising to the setting  where the Higgs fields and connections have poles of fixed orders \cite{Z}. One can then take the curve $C$ to have genus $g=0$, and in simple examples the resulting complex \hk structures can be written explicitly in terms of rational functions \cite{A2}. No such explicit formulae are expected for the Hitchin metric.

We expect our construction  to generalise to gauge groups other than $\SL_2(\bC)$ but this will require  new ideas. A key point in our construction is that the space of quadratic differentials  $H^0(C,\omega_C^{\tensor 2})$ is the cotangent fibre to the moduli space of curves. To generalise to the gauge group $G=\SL_m(\bC)$, for example, would require  a  space of `higher complex structures' whose cotangent fibre is the Hitchin base $\bigoplus_{k=2}^m H^0(C,\omega^{\tensor k})$. A natural candidate  is provided by the  work of Fock and Thomas \cite{FT}. The expectation is then that the total space of the cotangent bundle of this space  should carry a meromorphic Joyce structure generalising the one constructed here.  

\subsection{DT invariants and [GMN]}

In a celebrated development \cite{GMN1,GMN2} Gaiotto, Moore and Neitzke   uncovered a deep relation between the \hk geometry of the moduli spaces $\cM_C(E,\Phi)$ and the BPS invariants of a  class of four-dimensional  $N=2$ supersymmetric gauge theories known as theories of class $S[A_1]$. More precisely, they introduced a class of  non-linear Riemann-Hilbert (RH) problems defined by the BPS invariants, and showed that their solutions describe twistor lines in the twistor space of $\cM_C(E,\Phi)$. In mathematical terms these BPS invariants can be understood as the Donaldson-Thomas (DT) invariants of a certain three-dimensional Calabi-Yau (CY$_3$) triangulated category \cite{BS}. 

Our interest in the  moduli spaces  $\cM(C,E,\nabla,\Phi)$ stems from a general programme  \cite{RHDT1,RHDT2} which attempts to encode the  DT invariants of a CY$_3$  triangulated category  in a geometric structure on its space of stability conditions. 
This procedure is currently  highly conjectural, and involves a class  of RH problems closely related to those considered by Gaiotto, Moore and Neitzke, and obtained from them by a procedure  known in physics as the conformal limit \cite{G}.  In this limit it appears that the geometry of the Hitchin space considered in \cite{GMN2} should be replaced by the simpler geometry of the space $\cM(C,E,\nabla,\Phi)$ considered here.

The geometric structure  on spaces of stability conditions envisaged in \cite{RHDT2}  is a kind of non-linear Frobenius structure, and was christened a Joyce structure  in honour of the paper \cite{HolGen} where the main ingredients were first identified. In later work with Strachan \cite{Strachan} it was shown that a Joyce structure on a complex manifold   induces a complex \hk structure on  the total space  of its tangent bundle. The main result of this paper is a construction of  a meromorphic Joyce structure on the space $\cM(C,Q)$. 

The space $\cM(C,Q)$ can be identified with a space of stability conditions on a CY$_3$ category by the work of Haiden \cite{Haiden}. We leave for future research the problem of using the Joyce structure constructed here to solve the RH problems of \cite{RHDT1} defined by  the DT invariants of Haiden's category. This would probably be more easily accomplished in the setting of meromorphic quadratic differentials \cite{BS,Z}, using the Fock-Goncharov cluster structure  on the wild character variety \cite{FG} and results from exact WKB analysis as in \cite{GMN2}. One particular example was treated in detail in this way in \cite{A2}, and other partial results have been obtained  \cite{All0,All2}. 

\subsection{Summary of the construction}
\label{th}


We fix a genus $g>1$ throughout the paper. The space $\cM(C,Q)$ as defined above is a smooth Deligne-Mumford stack or, if we work in the analytic category, a complex orbifold. Since this may be uncomfortable for some readers, we shall also fix  an  integer $\ell>0$, and insist that all curves $C$ are equipped with a level $\ell$ structure. This  extra data plays no essential role in our constructions so we will omit it from the notation. We always assume $\ell>2$ since this has the pleasant consequence that  all moduli spaces appearing are smooth quasi-projective varieties. But the  reader happy with stacks  can eliminate the  level structures by taking $\ell=1$ and working instead with smooth Deligne-Mumford stacks.

Let us then introduce the smooth quasi-projective variety $M=\cM(C,Q)$ parameterising pairs $(C,Q)$  consisting of a smooth projective complex curve $C$, equipped as always with a level $\ell$ structure, and a quadratic differential $Q\in H^0(C,\omega_C^{\tensor 2})$ with simple zeroes.  Associated to a point $(C,Q)\in M$ is a smooth spectral curve $\Sigma$ cut out in the cotangent bundle $T^*_C$ by the equation $y^2=Q$. The projection $p\colon \Sigma\to C$  is a branched double cover  with a covering involution $\sigma$. The tangent space to $M$ at the point $(C,Q)$ can then be identified with the anti-invariant cohomology group $H^1(\Sigma,\bC)^-$. The dual of the integral anti-invariant  homology defines an integral affine structure $T_M^{\bZ}\subset T_M$ and we consider  the  quotient   $X^\hash=T_M/T_M^{\bZ}$. The fibre of the induced projection $\pi\colon X^\hash\to M$   over the point $(C,Q)$ is the quotient of the group $H^1(\Sigma,\bC^*)^-$ by the finite subgroup $p^*(H^1(C,\{\pm 1\}))$ and is isomorphic to $(\bC^*)^{6g-6}$

A Joyce structure on $M$ is essentially the data of a pencil of flat symplectic non-linear connections $h_\epsilon$ on the bundle $\pi\colon X^\hash\to M$ parameterised by $\epsilon\in \bC^*$. The associated complex \hk structure on $X^\hash$ is then defined by taking the eigenspaces of the operators $I,J,K$ to be the horizontal sub-bundles of $T_{X^\hash}$ defined by certain elements of this pencil.
We construct the non-linear connections $h_\epsilon$ as follows. 
We can realise elements of  $H^1(\Sigma,\bC^*)^-$ as the holonomy of anti-invariant line bundles with connection $(L,\partial)$ on $\Sigma$. The usual spectral correspondence associates to $L$ a Higgs bundle $(E,\Phi)$ on the curve $C$. Using an extension of this  correspondence to connections, valid under a genericity assumption on $L$, we can  use $\partial$ to induce a connection $\nabla$ on $E$. The required family of non-linear connections $h_\epsilon$ is then given by the isomonodromy flows for the connections $\nabla-\epsilon^{-1}\Phi$.

The complex \hk structure we construct on $M$  has poles; these arise from two interesting issues.
Firstly, the extension of the spectral correspondence to connections requires a genericity assumption on the line bundle $L$. This relates to  the theta divisor in the generalised Prym variety of the double cover $p\colon \Sigma\to C$. Secondly, given a fixed curve $C$ equipped with a Higgs bundle $(E,\Phi)$, we need to lift deformations of the quadratic differential $Q=\tr(\Phi^2)$ to deformations of the Higgs field $\Phi$. This relates to the wobbly locus in the space of Higgs bundles \cite{DP}. 

The extended spectral correspondence  in our construction can be 
viewed as an abelianization procedure for flat connections in the presence of a quadratic differential.  In the case of meromorphic quadratic differentials, this de Rham abelianization  can be compared with the Betti abelianization  of \cite{HN,Nik1} which depends on the choice of a spectral network on $C$. Their relationship  is highly non-trivial, and in fact, if we take the spectral network to be the WKB triangulation of the quadratic differential, one can view the solutions to the RH problems discussed above as intertwining these two abelianisation procedures.

\subsection*{Plan of the paper} The aim of the paper is to construct a meromorphic Joyce structure on the space $M=\cM(C,Q)$. A Joyce structure on a complex manifold $M$  is a combination of two ingredients: a period structure and a pencil of non-linear connections on the tangent bundle. The  definitions of all these terms can be found in Section \ref{general}.

The required period structure on $M$ is well-known and is described in Section \ref{three}. In Section \ref{four} we recall the standard correspondence between Higgs bundles on $C$ and line bundles on the spectral curve $\Sigma$, and explain how it can be  extended to bundles with connection. Section \ref{five}  introduces the essential diagrams of moduli spaces which will be used to construct the pencil of non-linear connections. We also prove two crucial generic finiteness results.

In Section \ref{six}  we recall the Atiyah-Bott symplectic form on the moduli space of flat connections and prove that our extended spectral correspondence preserves it. The meromorphic Joyce structure on $M$ is finally constructed in Section \ref{seven} using  isomonodromic flows. In Section \ref{slices}, we describe an interesting compatibility relation  between this Joyce structure  and the Lagrangian submanifolds in $M$ obtained by fixing the curve $C$.

We include in Appendix \ref{moduli}  a summary of the scheme-theoretic definitions and constructions of the various moduli spaces used in the main text.

\subsection*{Conventions and notation} 
We use rather unconventional conventions for labelling moduli spaces. In general a symbol $\cM_A(B)$ denotes the moduli space of objects of type $B$ on a fixed object $A$. So for example $\cM_C(E,\Phi)$ denotes the moduli space of $\SL_2(\bC)$ Higgs bundles $(E,\Phi)$ on a fixed curve $C$, whereas $\cM(C,E,\Phi)$ denotes the moduli space where $C$ is also allowed to vary. We can only apologise for the initially nonsensical appearance of statements such as `Take a point $(C,Q)\in \cM(C,Q)$', and hope that this proves less of an inconvenience  than having to constantly consult a dictionary of  the large number of moduli spaces that appear.  

The paper contains many connections, both linear and non-linear. Linear connections on a vector bundle $E$ are specified by their covariant derivative $E\to E\tensor \Omega^1$ and are usually denoted by the symbols $\nabla$ or $\partial$. Non-linear connections on a map $\pi\colon X\to M$ are specified by a bundle map $\pi^*(T_M)\to T_X$ and are denoted by small latin letters $h$, $j$ etc.  
Throughout the paper we encounter families of connections parameterised by $\epsilon\in \bC^*$, which we refer to as pencils. Often the inverse $\zeta=\epsilon^{-1}$ would seem to be a more natural parameter, but we will nonetheless use $\epsilon$ since in the relations with mathematical physics this is the most natural variable, relating variously to the string coupling, Planck's constant, etc.

We work with both complex manifolds and algebraic varieties. Except in the Appendix, all algebraic varieties appearing are smooth and quasi-projective over $\bC$. We view them as a subcategory of the category of complex manifolds. The derivative of a map of complex manifolds $f\colon X\to Y$ is denoted $f_*\colon T_X\to f^*(T_Y)$. The map $f$ is called {\'e}tale if $f_*$ is an isomorphism.

\subsection*{Acknowledgements} I am very grateful for   correspondence and conversations with  Dylan Allegretti, Anna Barbieri,  Maciej Dunajski,   Dima Korotkin,  Andy Neitzke, Nikita Nikolaev,  Ivan Smith, Ian Strachan, J{\"o}rg Teschner and Menelaos Zikidis.  Extended discussions with Zikidis were particularly important in getting the results into the final form presented here. I am supported by a Royal Society Research Professorship at the   University of Sheffield.


\section{Joyce structures}
\label{general}

This section introduces the notion of a Joyce structure on a complex manifold $M$. The definition arose from a line of work in Donaldson-Thomas theory \cite{RHDT2} which originated with a paper of Joyce \cite{HolGen}. We first define the notion of a pre-Joyce structure, which consists of a pencil of flat symplectic non-linear connections on the tangent bundle $\pi\colon X=T_M\to M$. Following \cite{Strachan}, we show that a pre-Joyce structure on $M$ induces a complex \hk structure on $X$. This  construction  is  well known in the twistor-theory literature, see for example \cite{DM}, and goes back to the work of Pleba{\'n}ski \cite{P}. A Joyce structure is then defined to be a  pre-Joyce structure with certain extra symmetries. The description of these symmetries involves a  strengthening of the notion of an integral affine structure which we call a period structure.

\subsection{Non-linear connections}
\label{connect}

We begin by briefly summarising some basic facts about non-linear  connections in the sense of Ehresmann.\todo{Give reference for Ehresmann connections} We work with complex manifolds and holomorphic maps, but everything in this section holds also in the smooth setting. 
 
Let $\pi\colon X\to M$ be a holomorphic submersion of complex manifolds. We denote the fibres by $X_m=\pi^{-1}(m)$. The derivative of $\pi$ gives rise to a  short exact sequence of vector bundles
\begin{equation}\label{bass}0\lra T_{X/M}\lRa{i} T_X\lRa{\pi_*} \pi^*(T_M)\lra 0.\end{equation}
\begin{defn}
A non-linear
connection on  the map $\pi$ is a    bundle map $h\colon \pi^*(T_M)\to T_X$ satisfying $\pi_*\circ h=1$.\end{defn}
Writing $H=\im(h)$ and $V=T_{X/M}$, the tangent bundle decomposes as a direct sum $T_X=H\oplus V$. We call tangent vectors and vector fields horizontal or vertical if they lie in $H$ or $V$ respectively. 
Note that a vector field $u\in H^0(M,T_M)$ can be lifted to a horizontal vector field $h(u)\in H^0(X,T_X)$ by composing  the pullback $\pi^*(u)\in H^0(X,\pi^*(T_M))$ with the map $h$.

Consider a smooth path $\gamma\colon [0,1]\to M$. Given a point  $x\in X_{\gamma(0)}$ we can look for a lifted path  $\alpha\colon [0,\delta]\to X$ satisfying $\alpha_*(\frac{d}{dt})=h(\gamma_*(\frac{d}{dt}))$ and $\alpha(0)=x$. Such a  lift will exist for small enough $\delta>0$. For $t\in [0,\delta]$ we call $\alpha(t)\in X_{\gamma(t)}$ the time $t$ parallel transport of the point $x$ along the path $\gamma$. Given a point $x_0\in X_{\gamma(0)}$ we can find a $\delta>0$ and open subsets  $U_t\subset X_{\gamma(t)}$ with $x_0\in U_0$, such that time $t$ parallel transport along $\gamma$ defines an  isomorphism $\PT_{\gamma}(t)\colon U_0\to U_t$ for each $t\in [0,\delta]$.

Given  complex manifolds $M,N$ there is a connection on the projection map $\pi_M\colon M\times N\to M$ induced by the canonical splitting $T_{M\times N}=\pi_M^*(T_M)\oplus \pi_N^*(T_N)$.  A connection $h$ on  $\pi\colon X\to M$ is called flat if it is locally isomorphic to a connection of this form. More precisely:

\begin{definition}
The connection $h$ is flat if the following equivalent conditions hold:
\begin{itemize}
\item[(i)] for every $x\in X$ there are local co-ordinates  $(x_1,\cdots, x_n)$ on $X$ at $x$, and $(y_1,\cdots, y_d)$ on $M$ at  $\pi(x)$, such that $x_i=\pi^*(y_i)$ and  $h(\frac{\partial}{\partial y_i})=\frac{\partial}{\partial x_i}$ for $1\leq i\leq d$,
\item[(ii)]  
the sub-bundle $H=\im(h)\subset T_X$ is involutive: $[H,H]\subset H$.
\end{itemize}
\end{definition}

Suppose given a relative symplectic form $\Omega_{\pi}\in H^0(X,\wedge^2 T^*_{X/M})$ on the map $\pi$. It restricts to a symplectic form $\Omega_m\in H^0(X_m,\wedge^2 T^*_{X_m})$ on each fibre $X_m$. Note that since $T_X/\im(h)=T_{X/M}$, the relative form $\Omega_{\pi}$ can be lifted uniquely to a form $\Omega\in H^0(X,\wedge^2 T^*_{X})$ satisfying $\ker(\Omega)=\im(h)$.
 We say that the connection $h$ preserves $\Omega_{\pi}$ if for any path $\gamma\colon [0,1]\to M$  the partially-defined parallel transport maps $\PT_{\gamma}(t)\colon X_{\gamma(0)}\to X_{\gamma(t)}$ take $\Omega_{\gamma(0)}$ to $\Omega_{\gamma(t)}$.  
\begin{lemma}
\label{symp}
\begin{itemize}
\item[(i)] The connection $h$ preserves $\Omega_\pi$ precisely if $i_{v_1} i_{v_2} (d\Omega)=0$ for any two vertical vector fields $v_1,v_2\in H^0(X,T_{X/M})$.
\item[(ii)] If the connection $h$ is flat then it preserves $\Omega_\pi$ precisely if $d\Omega=0$.
\end{itemize}
 \end{lemma}
 
 \begin{proof}
 The first statement is  \cite[Theorem 4]{Gotay}. For the second, take  three vector fields $u_1,u_2,u_3$ on $X$ and consider the expression defining $d\Omega(u_1,u_2,u_3)$. We can assume that each $u_i$ is either horizontal or vertical. Since $i_h(\Omega)=0$ for any horizontal vector field $h$, and horizontal vector fields are closed under the Lie bracket, we have $d\Omega(u_1,u_2,u_3)=0$ as soon as two of the $u_i$ are horizontal. The claim then follows from (i).\end{proof}

Suppose that a discrete group $G$ acts freely and properly on  $X$ preserving the map $\pi$. Then $Y=X/G$ is a complex manifold and the quotient map $q\colon X\to Y$ is {\'e}tale. There is an induced submersion $\eta\colon Y\to M$ and a factorisation $\pi=\eta\circ q$. A connection $h\colon \pi^*(T_M)\to T_X$  will be called $G$-invariant  if $g_*\circ h=h$ for all $g\in G$. There is then an induced connection $j\colon \eta^*(T_M)\to T_Y$ on $\eta$ uniquely defined by the condition that $q_*\circ h =q^*(j)$. We say that the connection $h$ descends along the quotient map $q$.

\subsection{Pre-Joyce structures}
\label{prejoyce}

Let $M$ be a complex manifold and let $\pi\colon X=T_M\to M$ be the total space of the tangent bundle of $M$. There is  a canonical isomorphism $\nu\colon \pi^*(T_M)\to T_{X/M}$  obtained by composing the chain of identifications
\begin{equation}\pi^*(T_M)_x=T_{M,\pi(x)}=  T_{T_{M,\pi(x)},x}= T_{X_{\pi(x)},x}=T_{X/M,x},\end{equation}
 and we set $v=i\circ\nu$.  A connection $h\colon \pi^*(T_M)\to T_X$ on $\pi$ then defines a family of such connections $h_\epsilon=h+\epsilon^{-1} v$ parameterised by $\epsilon\in \bC^*$. We call such a family a $\nu$-pencil of connections.

\begin{equation*}
\xymatrix@C=1em{  
 0\ar[rr] && T_{X/M}  \ar[rr]^{i} &&T_X  \ar[rr]^{\pi_*} &&\pi^*(T_M) \ar@/_1.8pc/[ll]_{h_\epsilon} \ar@/^1.8pc/[llll]^\nu \ar[rr] && 0 } \end{equation*}

Suppose that $M$ is equipped with a holomorphic symplectic form $\omega\in H^0(M,\wedge^2 T^*_M)$. Via the isomorphism $\nu$ we obtain a  relative symplectic form $\Omega_\pi\in H^0(X,\wedge^2 T_{X/M}^*)$.  
We say that a connection on $\pi$ is symplectic if it preserves $\Omega_\pi$.  

\begin{defn}
A pre-Joyce structure on a complex manifold $M$ consists of
\begin{itemize}
\item[(i)] a {holomorphic symplectic form} $\omega$ on $M$,
\item[(ii)] a {non-linear connection} $h$ on the tangent bundle $\pi\colon X=T_M\to M$,
\end{itemize}
such that for each $\epsilon\in \bC^*$ the connection $h_\epsilon=h+\epsilon^{-1}v$ is flat and symplectic.\end{defn}

To clarify this definition we now describe it in local co-ordinates, although the resulting expressions will play no role in what follows. Given a local co-ordinate system  $(z_1,\cdots,z_n)$ on $M$ there are  associated linear co-ordinates $(\theta_1,\cdots,\theta_n)$ on the tangent spaces $T_{M,m}$  obtained by writing a tangent vector in the form $\sum_i \theta_i\cdot  {\partial}/{\partial z_i}$. We thus get  induced local  co-ordinates $(z_i,\theta_j)$ on  the space $X=T_M$. In these co-ordinates $v(\frac{\partial}{\partial z_i})=\frac{\partial}{\partial \theta_i}$.

We always assume that  the co-ordinates $z_i$ are Darboux, in the sense that
\begin{equation*}
\label{wpq}\omega = \frac{1}{2}\,\sum_{p,q} \omega_{pq} \cdot dz_p\wedge dz_q,\end{equation*}
with $\omega_{pq}$ a constant skew-symmetric matrix.  We denote by $\eta_{pq}$ the inverse matrix.

The fact that the connection $h$ is flat and symplectic ensures that we can write it in Hamiltonian form\begin{equation}
\label{above}h\Big(\frac{\partial}{\partial z_i}\Big)= \frac{\partial}{\partial z_i} + \sum_{p,q} \eta_{pq} \cdot \frac{\partial W_i}{\partial \theta_p} \cdot \frac{\partial}{\partial \theta_q},\end{equation}
for functions $W_i\colon X\to \bC$. 
 The connection $h_\epsilon$ is then flat for all $\epsilon\in \bC^*$  if we can take $W_i=\partial W/\partial \theta_i$ for a  single function $W\colon X\to \bC$, which moreover satisfies 
\begin{equation}
\label{point_intro}
\frac{\partial^2 W}{\partial \theta_i \partial z_j}-\frac{\partial^2 W}{\partial \theta_j \partial z_i }=\sum_{p,q} \eta_{pq} \cdot \frac{\partial^2 W}{\partial \theta_i \partial \theta_p} \cdot \frac{\partial^2 W}{\partial \theta_j \partial \theta_q}.\end{equation}
The function $W$ is called the Pleba{\'n}ski function, and the partial differential equations  \eqref{point_intro} are  known  as Pleba{\'n}ski's second heavenly equations  \cite{DM}. 

\subsection{Complex \hk structures}

By a complex \hk structure on  a complex manifold $X$ we mean the data of a holomorphic metric $g\colon T_X \tensor T_X \to \O_X,$
together with  endomorphisms $I,J,K\in \End_X(T_X)$ satisfying the quaternion relations
\begin{equation*}I^2=J^2=K^2=IJK=-1,\end{equation*}
which preserve $g$, and which are parallel for the  holomorphic Levi-Civita connection $\nabla$:
\begin{equation}g(R (u_1), R (u_2))= g(u,v), \qquad \nabla(R=0, \qquad R\in \{I,J,K\}.\end{equation}
Such structures have appeared before in the literature, often under different names.

Let $M$ be a complex manifold with a holomorphic symplectic form $\omega$. A non-linear connection $h$ on the tangent bundle $\pi\colon X=T_M\to M$ gives a decomposition\begin{equation}T_X=\im(v)\oplus \im(h)\isom \pi^*(T_M)\tensor_{\bC}\bC^2.\end{equation}
We can define a metric $g\colon T_X\otimes T_X\to \O_X$ by taking the tensor product of $\pi^*(\omega)$ with the standard symplectic form on $\bC^2$, and an action of the quaternions on $T_X$ by identifying the complexification of the quaternions $\bH\tensor_{\bR}\bC$  with the algebra $\End_{\bC}(\bC^2)$. With appropriate conventions this leads to the formulae\begin{equation}
\label{ijk}
\begin{aligned}
I\circ h&=i\cdot h,\qquad\quad & J\circ h&=-v,  &\qquad\quad  K\circ h&=i\cdot  v, \\
I\circ v&=-i\cdot v,   &  J\circ v&=h,   & K\circ v&=i \cdot h, \end{aligned}\end{equation}
which should be interpreted as equalities of maps $\pi^*(T_M)\to T_X$, and
\begin{equation}\label{g}g(h(u_1),v(u_2))=\tfrac{1}{2}\omega(u_1,u_2), \qquad g(h(u_1),h(u_2))=0= g(v(u_1),v(u_2)).
\end{equation}
It is easily checked that $g$ is preserved by the endomorphisms $I,J,K$.

The following result implies in particular that a pre-Joyce structure on a complex manifold $M$  induces a complex \hk structure on the total space $X=T_M$.

\begin{thm} The   endomorphisms $I,J,K$ are parallel for the Levi-Civita connection $\nabla$ associated to $g$ precisely if the connection $h_\epsilon$ is flat and symplectic for all $\epsilon\in \bC^*$.\end{thm}

\begin{proof}
We begin with a general remark. Let $g\colon T_X\times T_X\to \O_X$ be a metric on a complex manifold $X$ with associated Levi-Civita connection $\nabla$. Let $R\in \End_X(T_X)$ be an endomorphism which is compatible with $g$ and satisfies $R^2=-1$. We can then define a 2-form $\Omega$ on $X$ by setting $\Omega_R(u_1,u_2)=g(R(u_1),u_2)$. Let $H\subset T_X$ denote the $+i$ eigenbundle of $R$. Then standard proofs from K{\"a}hler geometry apply unchanged in this holomorphic context  to give implications
\begin{equation}\label{ab}\nabla(R)=0\implies [H,H]\subset H, \qquad \nabla(R)=0 \iff d \Omega_R=0.\end{equation}

Return now to  the setting above. For $\epsilon \in \bC^*$ we  introduce the endomorphism  \begin{equation}J_\epsilon=I-i\epsilon^{-1}(J+iK).\end{equation} A simple calculation using the definitions \eqref{ijk} shows that  $J_\epsilon^2=-1$, and that the $+i$ eigenbundle of $J_\epsilon$ coincides with $H_\epsilon=\im(h_\epsilon)$. 

As in Section \ref{prejoyce}, the symplectic form $\omega$ on $M$ induces a relative symplectic form $\Omega_\pi$ on the projection $\pi\colon X\to M$. Moreover, as explained before Lemma \ref{symp}, there is then a unique 2-form $\Omega_\epsilon$ on $X$ satisfying  the conditions\begin{equation}\ker(\Omega_\epsilon)=H_\epsilon, \qquad \Omega_\epsilon(v(u_1),v(u_2))=\omega(u_1,u_2),\end{equation} where $u_1,u_2$ are arbitrary vector fields on  $M$. Another calculation using  \eqref{ijk} and \eqref{g} shows that this form is given explicitly by the formula
\begin{equation}
\Omega_\epsilon =\epsilon^{-2}\cdot \Omega_++2i\epsilon^{-1}\cdot \Omega_I +\Omega_-, \qquad \Omega_\pm=\Omega_{J\pm iK}.\end{equation}

We can now prove the Theorem. Suppose first that $I,J,K$ are parallel. Then  $J_\epsilon$ is parallel for all $\epsilon\in \bC^*$, and applying \eqref{ab} with $R=J_\epsilon$ we find that $[H_\epsilon,H_\epsilon]\subset H_\epsilon$ and hence that $h_\epsilon$ is flat. Since $\Omega_\epsilon$ is also parallel and hence closed, applying Lemma \ref{symp} shows that $h_\epsilon$ is  symplectic.
Conversely suppose that for all $\epsilon \in \bC^*$ the connection $h_\epsilon$ is flat and symplectic.  Then by ~Lemma \ref{symp} again, $d\Omega_\epsilon=0$  for all $\epsilon \in \bC^*$, and this  easily implies that  $d\Omega_R=0$ for $R\in \{I,J,K\}$. By \eqref{ab} we conclude that $I,J,K$ are parallel. 
\end{proof}

%
%
%


\subsection{Period structures}

Let $M$ be a complex manifold and $\cH$ a holomorphic vector bundle on $M$. By a {lattice} in $\cH$ we mean a  locally-constant subsheaf of abelian groups $\cH^{\bZ}\subset \cH$ such that the multiplication map $\cH^{\bZ}\tensor_{\bZ}\O_M\to \cH$ is an isomorphism. There is an induced flat linear connection  $\nabla$ on $\cH$ whose  flat sections are $\bC$-linear combinations of the sections of  $\cH^{\bZ}$. 

\begin{defn}
\label{pp}
A period structure on a complex manifold $M$  consists of
\begin{itemize}
\item[(P1)] a lattice $T_M^{\bZ}\subset T_M$ whose associated flat connection we denote by $\nabla$,
\item[(P2)]  a vector field $Z\in \Gamma(M,T_M)$ satisfying $\nabla(Z)=\id$.
\end{itemize}
\end{defn}

Let $(T_M^{\bZ},\nabla,Z)$ be a period structure on a complex manifold $M$ and take a base-point $p\in M$.  A basis of the free abelian group $T_{M,p}^{\bZ}$ extends uniquely to a basis of $\nabla$-flat sections $\phi_1,\cdots,\phi_n$ of $T_M$ over a contractible open neighbourhood $p\in U\subset M$. Writing the vector field $Z$ in the form $Z=\sum_i z_i\cdot \phi_i$ then defines holomorphic functions $z_i\colon U\to \bC$, and condition (P2)  implies that $(z_1,\cdots,z_n)$ is a local co-ordinate system on $M$, and that $\phi_i=\frac{\partial}{\partial z_i}$.  Note in particular that the connection $\nabla$ is necessarily torsion-free.

Recall  that an integral affine structure on a complex manifold $M$ consists of a lattice $T_M^{\bZ}\subset T_M$ whose associated flat  connection $\nabla$ is torsion-free \cite{KS}. A local co-ordinate system $(z_1,\cdots,z_n)$ is then called integral affine if the tangent vectors $\frac{\partial}{\partial z_i}$ lie in the lattice $T_M^{\bZ}$. Such co-ordinate systems are uniquely defined up to affine transformations of the form $z_i\mapsto \sum_j a_{ij}z_j + v_i$ with $(a_{ij})\in \GL_n(\bZ)$ and $(v_i)\in \bC^n$.

Given a period structure on a complex manifold $M$ we obtain an integral affine structure  by forgetting the vector field $Z$. A system of integral affine co-ordinates $(z_1,\cdots,z_n)$ will be called integral linear if $Z=\sum_i z_i \frac{\partial}{\partial z_i}$. Such co-ordinate systems are uniquely defined up to linear transformations of the form $z_i\mapsto \sum_j a_{ij}z_j$ with $(a_{ij})\in \GL_n(\bZ)$. Thus a period structure can be thought of as an integral \emph{linear} structure. 

Using the connection $\nabla$, we can lift the vector field $Z\in \Gamma(M,T_M)$ to a horizontal vector field $E\in \Gamma(X,T_X)$.
Let us consider the case when there is a $\bC^*$ action on the manifold $M$ whose generating vector field is $Z$. 
There is a $\bC^*$ action on $X=T_M$ obtained by combining the induced action of $\bC^*$ on $X=T_M$ with the rescaling action on the fibres of $\pi\colon X=T_M\to M$ of weight $-1$. If $m_t\colon M\to M$ denotes the action of  $t\in \bC^*$  on $M$ this is the   action  for which $t\in \bC^*$ sends $v\in T_{M,m}$ to $(m_t)_*(t^{-1}v)\in T_{M,m_t(v)}$. 

\begin{lemma}
\label{jen}
The generating vector field for this $\bC^*$ action on $X$ is  the horizontal lift $E$.
\end{lemma}

\begin{proof}
If we take a system of integral linear co-ordinates $(z_1,\cdots,z_n)$ on $M$ then by definition  $Z=\sum_i z_i \cdot \frac{\partial}{\partial z_i}$.  Taking associated  co-ordinates $(z_i,\theta_j)$ on  $X=T_M$ as  before the $\nabla$-horizontal lift of $Z$ is the vector field $E=\sum_i z_i \cdot \frac{\partial}{\partial z_i}$ on $X$. The $\bC^*$ action on $X$ induced by that on $M$ is given by $(z_i,\theta_j)\mapsto (tz_i,t\theta_j)$. Composing with the contraction in the fibres we obtain the action $(z_i,\theta_j)\mapsto (tz_i,\theta_j)$ whose generating vector field is $E$.
\end{proof}
 
\begin{defn}
\label{pps}
A {period structure with skew form} on a complex manifold $M$ consists of a period structure $(T_M^{\bZ},\nabla,Z)$  together with a skew-symmetric form
\begin{equation}\eta\colon T_M^*\times T_M^*\to \O_M,\end{equation}
such that $\eta/2\pi i$  takes  integral values on the lattices $(T_M^{\bZ})^*\subset T_M^*$.
\end{defn}

The pairing $\eta$ is necessarily parallel for the flat connection $\nabla$, and it follows that it defines a holomorphic Poisson structure on $M$. 
We will be particularly interested in the case when the kernel of $\eta$ is zero. Viewing $\eta$ as a linear map $ T_M^*\to T_M$, its inverse defines a complex symplectic form
 $\omega\in H^0(M,\wedge^2 T_M^*)$.

\subsection{Joyce structures}

Let $M$ be a complex manifold with a period structure $(T_M^{\bZ},\nabla,Z)$. The rescaled lattice $(2\pi i) T_M^{\bZ}\subset T_M$ acts on $X=T_M$ by translations in the fibres. We  introduce the quotient
\begin{equation}
\label{covid}X^{\hash}=T_M^{\hash}=T_M/(2\pi i)\,T_M^{\bZ}.\end{equation}
We also consider the involution $\iota\colon X\to X$ which acts by $-1$ on the fibres of $\pi\colon X\to M$. 

\begin{defn}
\label{joyce}
Let $M$ be a complex manifold, and let $\pi\colon X=T_M\to M$ denote the total space of the holomorphic tangent bundle.  A {Joyce structure} on $M$ consists of
\begin{itemize}
\item[(a)] a period structure with skew form $(T_M^{\bZ},Z,\nabla,\eta)$ on $M$,
\item[(b)] a  pre-Joyce structure $(\omega,h)$ on $M$, \end{itemize}
satisfying the following conditions:
\begin{itemize}
\item[(J1)] the symplectic form $\omega$ is the inverse of the skew-from $\eta$,

\item[(J2)]  the connection $h$ is invariant under the action of the lattice $(2\pi i)\,T_M^{\bZ}\subset T_M$,

\item[(J3)] if $E$ is the $\nabla$-horizontal lift of the vector field $Z$, then for any vector field $v$ on $M$ \begin{equation}h([Z,v])=[E,h(v)],\end{equation}

\item[(J4)] the connection $h$ is invariant under the action of the involution $\iota\colon X\to X$.
\end{itemize}
\end{defn}



Note that once  the period structure with skew form $(T_M^{\bZ},Z,\nabla,\eta)$ on $M$ is fixed, the Joyce structure involves only one further piece of data, namely the non-linear connection $h$. For the Joyce structures appearing in this paper the  period structure is elementary and well-known, so all our work will go into defining the non-linear connection $h$.

Let us express the conditions of Definition \ref{joyce} in terms of a local co-ordinates as in Section \ref{prejoyce}. If we take a system of integral linear co-ordinates $(z_1,\cdots,z_n)$ on $M$ then by definition  $Z=\sum_i z_i \cdot \frac{\partial}{\partial z_i}$.  Taking associated  co-ordinates $(z_i,\theta_j)$ on  $X=T_M$ as  before the $\nabla$-horizontal lift of $Z$ is the vector field $E=\sum_i z_i \cdot \frac{\partial}{\partial z_i}$ on $X$. The symmetries (J2)-(J4) then translate into the following conditions on the Pleba{\'n}ski function:
\begin{gather}
\frac{\partial^2 W}{\partial \theta_p \partial \theta_q}(z_i,\theta_j+2\pi i k_j)=\frac{\partial^2 W}{\partial \theta_q \partial \theta_q}(z_i, \theta_j)\,  \\
W(\lambda z_i,\theta_j)=\lambda^{-1}  W(z_i,\theta_j),\\
W(z_i,-\theta_j)=-W(z_i,\theta_j).,\end{gather}
where $(k_1,\cdots,k_n)\in \bZ^n$ and $\lambda\in \bC^*$.

The construction we describe in this paper produces what we shall call a meromorphic Joyce structure. This means that 
the connection $h$ has poles on certain subsets of  $X$. More precisely,  there is an effective divisor $D\subset X$, and  $h$ is defined by a bundle map $h\colon \pi^*(T_M)\to T_X(D)$ satisfying \begin{equation}(\pi_*\tensor \O_X(D))\circ h=1_{\pi^*(T_M)}\tensor s_D,\end{equation}
 where  $s_D\colon \O_X\to \O_X(D)$ is the canonical inclusion. This means that when expressed in  terms of local co-ordinates as above, the  Pleba{\'n}ski function $W(z_i,\theta_j)$ is  a meromorphic function. 
 

\section{The period structure on the space of quadratic differentials}
\label{three}

For the rest of the paper we fix an integer $g>1$ and a level $\ell>2$.  As discussed in Section \ref{th}  we  insist that all curves $C$ are equipped with a level $\ell$ structure, although we suppress this from the notation. In this section we introduce the space $M=\cM(C,Q)$ which will form the base of our Joyce structure. It parameterises pairs $(C,Q)$ consisting of a smooth projective curve $C$ of genus $g$ equipped with a level $\ell$ structure,  and    a quadratic differential $Q\in H^0(C,\omega_C^{\tensor 2})$ with simple zeroes.   Any such pair $(C,Q)$ determines a branched double cover  $p\colon \Sigma\to C$ which we call the spectral curve. We construct a period structure with skew form on $M$, and give a moduli-theoretic description of the fibres of the map \eqref{covid} in terms of line bundles with connection on $\Sigma$. 

\subsection{Moduli space of quadratic differentials}
\label{quad}

Let us begin by  recalling the definition of a level structure. 
Given a smooth complex projective curve $C$ of genus $g$, the homology group $H_1(C,\bZ/\ell)$ is a free $\bZ/\ell$-module of rank $2g$. The intersection form defines a non-degenerate skew-symmetric  form \begin{equation}\<-,-\>\colon H_1(C,\bZ/\ell)\times H_1(C,\bZ/\ell)\to \bZ/\ell.\end{equation} A level $\ell$ structure on $C$ is a basis  $(\alpha_1,\cdots,\alpha_g,\beta_1,\cdots, \beta_g)$ for  $H_1(C,\bZ/\ell)$ which is symplectic, in the sense that $\<\alpha_i,\alpha_j\>=0=\<\beta_i,\beta_j\>$ and $\<\alpha_i,\beta_j\>=\delta_{ij}$.

Let $\cM(C)$ denote the moduli space of smooth complex projective curves of genus $g$ equipped with a level $\ell$ structure. As we recall in Appendix \ref{moduli}, given our  assumptions $g>1$ and $\ell>2$  this is a smooth  quasi-projective  complex variety of dimension $3g-3$.

The tangent space to $\cM(C)$ at a curve $C$  is the cohomology group $H^1(C,T_C)$, and Serre duality gives $H^0(C,\omega_C^{\tensor 2})=H^1(C,T_C)^*$, so the cotangent bundle $T^*_{\cM(C)}$ parameterises pairs $(C,Q)$ consisting of a smooth complex projective curve $C$ equipped with a level $\ell$ structure, together with an element $Q\in H^0(C,\omega_C^{\tensor 2})$. We define $\cM(C,Q)\subset T^*_{\cM(C)}$ to be the open subset of pairs $(C,Q)$ for which  $Q$ has simple zeroes. Then $M=\cM(C,Q)$ is a smooth quasi-projective  complex variety of dimension $6g-6$.

\subsection{Spectral curve}
\label{cl}

Let $C$ be a smooth complex projective curve of genus $g$, and $Q\in H^0(C,\omega_C^{\tensor 2})$ a quadratic differential with simple zeroes. The spectral curve $\Sigma$ associated to the pair $(C,Q)$ is the smooth projective curve cut out inside the total space of the cotangent bundle $T^*_C$ by the equation $y^2=Q$. The projection  $(x,y)\mapsto x$ defines a branched double cover $p\colon \Sigma\to C$, and there is a covering involution $\sigma\colon \Sigma\to \Sigma$ defined by $(x,y)\mapsto (x,-y)$. The assumption that $Q$ has simple zeroes ensures that $\Sigma$ is smooth, and the fact that $Q$ has at least one zero implies that $\Sigma$ is connected.

Denote by  $R\subset \Sigma$ the branch divisor of  the  map $p\colon \Sigma\to C$.  It has degree $4g-4$, so by the Riemann-Hurwitz formula the spectral curve $\Sigma$ has genus $4g-3$. The dual of the derivative of $p$ defines a canonical section $s\colon p^*(\omega_C)\to \omega_\Sigma$ fitting into  a short exact sequence
\begin{equation}\label{nik}0\lra p^*(\omega_C)\lRa{s} \omega_{\Sigma}\lra \O_R\lra 0.\end{equation}
On the other hand the square-root of $p^*(Q)$ defines a section of $p^*(\omega_C)$  with simple zeroes on $R$, and hence a short exact sequence
\begin{equation}\label{nik2}0\lra \O_\Sigma\lRa{\phi} p^*(\omega_C)\lra \O_R\lra 0.\end{equation}

We define the invariant and anti-invariant homology groups \begin{equation*}\label{subgroups}H_1(\Sigma,\bZ)^{\pm}=\{\gamma\in H_1(\Sigma,\bZ): \sigma^*(\gamma)=\pm \gamma\},\end{equation*}
and similarly for the cohomology groups $H^1(\Sigma,\bZ)^\pm$, $H^1(\Sigma,\bC)^\pm$, etc. There is a short exact sequence of free abelian groups
\begin{equation}
\label{ext}0\lra H_1(\Sigma,\bZ)^-\lra H_1(\Sigma,\bZ)\lRa{p_*} H_1(C,\bZ)\lra 0,\end{equation}
the map $p_*$ being surjective\todo{Explain} because $p$ is ramified.

Taking maps of \eqref{ext} into $\bZ$  shows that the image of $p^*\colon H^1(C,\bZ)\to H^1(\Sigma,\bZ)$ coincides with the subgroup $H^1(\Sigma,\bZ)^+$.
The anti-invariant homology group $H_1(\Sigma,\bZ)^-$ is  therefore free  of rank $6g-6$. 
We also consider the extended  group
\begin{equation*}\tilde{H}^1(\Sigma,\bZ)^-:=\Hom_{\bZ}(H_1(\Sigma,\bZ)^-,\bZ)=H^1(\Sigma,\bZ)/H^1(C,\bZ).\end{equation*}

\subsection{Period structure}

Introduce the vector bundle $\cH\to M$ whose fibre over a point $(C,Q)$ is the vector space $H^1(\Sigma,\bC)^-$. 
It contains a lattice $\cH^{\bZ}\subset \cH$ whose fibres are the groups $\tilde{H}^1(\Sigma,\bZ)^-$.  The associated flat connection $\nabla^{\GM}$ on $\cH$ is the Gauss-Manin connection. The dual bundle $\cH^*$ has fibres $H_1(\Sigma,\bC)^-$ and contains the dual lattice $(\cH^\bZ)^*$ with fibres $H_1(\Sigma,\bZ)^-$. The intersection form   defines a    parallel  skew-symmetric form  on $\cH^*$ 
which takes integral values on $(\cH^{\bZ})^*$.

For each point $(C,Q)\in M$, the tautological  1-form $y \,dx$ on $T^*C$ restricts to a 1-form $\lambda\in H^0(\Sigma,\omega_\Sigma)$  satisfying  $\lambda^{\tensor 2}=p^*(Q)$ and $\sigma^*(\lambda)=-\lambda$. This  should not be confused with the section $\phi$ appearing in \eqref{nik2}: there is a  relation $\lambda=s\circ \phi$.
The de Rham cohomology class associated to $\lambda$  is an element of $H^1(\Sigma,\bC)^-$, and the resulting map  \begin{equation}
\label{delta}\delta\colon M\to \cH, \qquad (C,Q)\mapsto [\lambda]\in H^1(\Sigma,\bC)^-\end{equation} is a holomorphic section of the bundle $\cH$.

\begin{thm}[\cite{Veech}]\label{existperiods} The  covariant derivative of  $\delta$ with respect to the Gauss-Manin connection defines an isomorphism\todo{Check/find reference for this statement}  $\nabla^{\GM}(\delta)\colon T_{M} \to \cH$.
\end{thm}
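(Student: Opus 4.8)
The plan is to identify the derivative $\nabla^{\GM}(\delta)$ as a well-known period map and invoke a classical infinitesimal Torelli-type computation. First I would set up the local picture: fix $(C,Q)\in M$ and a tangent vector $\xi\in T_{(C,Q)}M$. Since $M$ is open in $T^*\cM(g)$, the vector $\xi$ decomposes into a deformation of the curve $C$ together with a deformation of the quadratic differential $Q$; concretely, a first-order deformation of the pair corresponds to a class in the hypercohomology of the complex $[T_C\to \cO_C\cdot Q]$ encoding simultaneous deformation of complex structure and of the section $Q$ of $\omega_C^{\otimes 2}$. What matters is that deforming $(C,Q)$ deforms the spectral curve $\Sigma=\{y^2=Q\}\subset T^*C$, and since $\Sigma$ is smooth (simple zeroes) this is an unobstructed deformation of $\Sigma$; the class $[\lambda]\in H^1(\Sigma,\bC)^-$ varies, and $\nabla^{\GM}(\delta)(\xi)$ is by definition the anti-invariant part of the variation of the Liouville class $[\lambda=y\,dx]$ along this family.

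The key step is a Kodaira-Spencer / residue computation: differentiating the relation $\lambda^{\otimes 2}=p^*Q$ and using that $\lambda$ is the restriction of the tautological form, one shows that the derivative of $[\lambda]$ in the direction $\xi$ is computed by cup product with the Kodaira-Spencer class of the family of spectral curves, landing in $H^1(\Sigma,\cO_\Sigma)^- \subset H^1(\Sigma,\bC)^-$ for a deformation that moves the complex structure, and by the "obvious" linear variation $\lambda\mapsto \lambda + \tfrac{1}{2}\,p^*(\dot Q)/y$ for the part that varies $Q$ with $C$ fixed. The point is that this assembles into a map whose target is all of $\cH$; dimension count gives $\dim M = 2d = 6g-6 = \operatorname{rk}_{\bC}\cH$, so it suffices to prove either injectivity or surjectivity. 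I would aim for injectivity: if $\nabla^{\GM}(\delta)(\xi)=0$, then $[\lambda]$ is constant to first order along $\xi$; using that $\lambda$ determines $Q=\lambda^{\otimes 2}$ pushed down (equivalently $\Sigma$ together with its embedding in $T^*C$ is recovered from the pair $(\Sigma,[\lambda])$ via $\lambda$ being a genuine holomorphic $1$-form, not merely a cohomology class) one deduces $\xi=0$. More precisely, a holomorphic $1$-form $\lambda$ on $\Sigma$ with $\sigma^*\lambda=-\lambda$ and whose square descends to $C=\Sigma/\sigma$ recovers the point of $M$; the subtlety is passing from "the cohomology class $[\lambda]$ is fixed" to "the form $\lambda$ is fixed," which is exactly where one uses that $\lambda$ is the tautological form and that varying within a fixed cohomology class while staying holomorphic and anti-invariant forces the variation to be exact, hence (by anti-invariance and $H^0(\Sigma,\cO_\Sigma)^-=0$) zero.

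The main obstacle I expect is precisely this last implication — controlling the difference between the de Rham \emph{class} $[\lambda]$ and the holomorphic \emph{form} $\lambda$ as the complex structure on $\Sigma$ moves. This is really the statement that the section $\delta$ is "transverse" to the Hodge filtration, or equivalently that the composite of $\nabla^{\GM}(\delta)$ with projection to $H^1(\Sigma,\cO_\Sigma)^-$ already sees enough; this is a Griffiths-transversality argument in disguise. I would handle it by working with an explicit local trivialization of $\cH$ near $(C,Q)$ via a symplectic basis of $H_1(\Sigma,\bZ)^-$ and writing $\delta$ in period coordinates $\int_{\gamma_i}\lambda$; the derivative matrix then has a block structure coming from the splitting $\xi = (\text{deformation of }C) \oplus (\text{deformation of }Q)$, with the $\dot Q$-block manifestly an isomorphism onto a Lagrangian complement (since $H^0(C,\omega_C^{\otimes 2})\to \cH$ via $\dot Q\mapsto[p^*\dot Q/(2y)]$ is injective — a direct check using that $p^*\dot Q/(2\lambda)$ is a nonzero anti-invariant holomorphic $1$-form when $\dot Q\neq 0$) and the complex-structure block filling out the rest by Griffiths transversality applied to the Gauss-Manin connection on the family of spectral curves. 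Combining the two blocks with the dimension count $6g-6=6g-6$ yields the isomorphism. Throughout I would lean on the cited source \cite{Veech} for the analytic details and present only the structural outline here, since (as the paper itself flags) a fully careful treatment belongs to \cite{BZ}.
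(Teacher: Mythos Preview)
The paper does not give a proof of this theorem: it is stated with attribution to Veech and immediately reformulated as the assertion that the period functions $z_i=\int_{\gamma_i}\sqrt{Q}$ are local coordinates on $M$. There is nothing in the paper to compare your proposal against; your closing sentence, deferring the analytic details to \cite{Veech}, is in fact precisely what the paper itself does.

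That said, your outline has a genuine gap at the step you yourself flag as the main obstacle. The argument ``$[\lambda]$ constant to first order implies $\lambda$ constant to first order, because an exact anti-invariant form is $df$ with $f\in H^0(\Sigma,\O_\Sigma)^-=0$'' is only valid when the complex structure on $\Sigma$ is held fixed. When $\Sigma$ varies, the first-order variation $\dot\lambda$ is merely a closed $C^\infty$ $1$-form on the underlying smooth surface, not a holomorphic one; the condition $[\dot\lambda]=0$ gives $\dot\lambda=df$ for some \emph{smooth} function $f$, and the vanishing of $H^0(\Sigma,\O_\Sigma)^-$ is then irrelevant. Your block argument fares no better: you invoke ``Griffiths transversality'' to claim the complex-structure block fills out the complement of $H^0(\Sigma,\omega_\Sigma)^-$, but Griffiths transversality only constrains \emph{where} the image of $\nabla^{\GM}$ lands (one step down in the Hodge filtration); it says nothing about injectivity or surjectivity. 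What is actually required, after quotienting out the vertical direction, is that the induced map $H^1(C,T_C)\to H^1(\Sigma,\O_\Sigma)^-$ be an isomorphism---an infinitesimal Torelli statement for the Prym. That is the real content of the theorem, and your outline does not supply it.
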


Taking  a basis $(\gamma_1,\cdots,\gamma_{n})\subset H_1(\Sigma,\bZ)^-$ at some point $(C,Q)\in M$ and extending to nearby points using the Gauss-Manin connection gives locally-defined  functions on $M$ \begin{equation}
\label{periodcoords}z_i=Z(\gamma_i)=([\lambda],\gamma_i)=\int_{\gamma_i} \sqrt{Q}, \qquad 1\leq i\leq n.\end{equation}
 Theorem \ref{existperiods} is then  the statement that these functions are local co-ordinates on $M$.  Note that the associated linear co-ordinates $\theta_i=(dz_i,-)$ on the fibres of the bundle $T_M\to M$ considered in Section \ref{prejoyce} correspond, under the isomorphism of Theorem \ref{existperiods}, to the functions on the fibres  of the bundle $\cH\to M$ given by pairing with the classes $\gamma_i$.

We can use the isomorphism of Theorem \ref{existperiods} to transfer the data $(\cH^{\bZ},\nabla^{\GM}, \delta)$ from the bundle $\cH$ to the tangent bundle $T_M$. This defines a period structure  $(T_M^{\bZ},\nabla, Z)$ on $M$ for which the periods \eqref{periodcoords} are integral linear co-ordinates. The required identity  $\nabla(Z)=\id$ holds by definition. The intersection form on $\cH^*$ induces  a skew-symmetric form $ T^*_M\times T^*_M\to \O_M$ which takes integral values on the lattice $(T_M^{\bZ})^*\subset T_M^*$. We obtain a period structure with skew form by taking $\eta$ to be this form multiplied by $2\pi i$. Since the  intersection form is non-degenerate, the inverse to $\eta$ defines a symplectic form $\omega\in H^0(M,\wedge^2 T^*_M)$. 

\subsection{Prym variety and abelian connections}
\label{prym}

We denote by $J(C)$ and $J(\Sigma)$ the Jacobians of the curves $C$ and $\Sigma$.  Set 
\begin{equation*}J(\Sigma)^-=\{M\in J(\Sigma): M\tensor \sigma^*(M)\isom \O_{\Sigma}\},\qquad J^2(C)=\{P\in J(C):P^{\tensor 2}\isom \O_C\}.\end{equation*}
The pullback map $p^*\colon J(C)\to J(\Sigma)$ is injective \cite[Section 3]{mum}, and we identify $J(C)$ with its image. The Prym variety  is defined by either of the two quotients
\begin{equation}\label{canc}P(\Sigma)=J(\Sigma)/J(C) = J(\Sigma)^-/ J^2(C).\end{equation}

To see that the  two quotients in \eqref{canc} are indeed the same, consider the map $j\colon J(\Sigma)^-\to J(\Sigma)/J(C)$ induced by the inclusion $J(\Sigma)^-\subset J(\Sigma)$. Then $j$ is surjective because for any $M\in J(\Sigma)$ we can write $M=N^{\tensor 2}$, and then \begin{equation*}
\label{hampers}M=(N\tensor \sigma^*(N^*))\tensor (N\tensor \sigma^*(N)).\end{equation*} The first factor clearly lies in $J(\Sigma)^-$, and it is proved\todo{Check this} in \cite[Section 3]{mum} that the second lies in $J(C)$. The kernel of $j$ is the intersection $J(C)\cap J(\Sigma)^-\subset J(\Sigma)$, and since any element $M\in J(C)$ satisfies $\sigma^*(M)=M$, this coincides with $J^2(C)$.

We also consider the spaces  $J^\hash(C)$ and $J^\hash(\Sigma)$  of line bundles with connection. We can again identify $J^\hash(C)$ with the image of the pullback map $p^*\colon J^\hash(C)\to J^\hash(\Sigma)$. We set
\begin{equation*}J^\hash(\Sigma)^-=\{(M,\partial)\in J^\hash(\Sigma): (M,\partial)\tensor \sigma^*(M,\partial)\isom (\O_{\Sigma},d)\}.\end{equation*}
Note that if 
 $(N,\partial_N)$ is a line bundle with connection on $C$, and  $P^{\tensor 2}\isom N$, then there is a  unique connection $\partial_P$  such that $(P,\partial_P)^{\tensor 2}\isom (N,\partial_N)$. In particular each 
 $P\in J^2(C)$ has a unique connection $\partial_P$ satisfying $(P,\partial_P)^{\tensor 2}=(\O_C,d)$, and we can therefore identify $J^2(C)$ with a subgroup of $J^\hash(\Sigma)$. We  define \begin{equation}\label{bed}P^{\hash}(\Sigma)= J^\hash(\Sigma)/J^\hash(C) = J^{\hash}(\Sigma)^-/J^2(C),\end{equation}
 with a similar argument as before showing that these two quotients are equal.\todo{Small missing detail here.} 
 
 The exact sequence \eqref{ext} shows that
 \begin{equation*}
 \label{idiotputin}\tilde{H}^1(\Sigma,\bC^*)^-:=\Hom_{\bZ}(H_1(\Sigma,\bZ)^-,\bC^*)\isom 
 H^1(\Sigma,\bC^*)/H^1(C,\bC^*).\end{equation*}
The Riemann-Hilbert isomorphism $J^\hash(\Sigma)\isom H^1(\Sigma,\bC^*)$ then induces an isomorphism
 \begin{equation}
 \label{mapp}P^{\hash}(\Sigma)\isom \tilde{H}^1(\Sigma,\bC^*)^-.\end{equation}

\subsection{Anti-invariant branched connections}
\label{aunty}

Let $F$ be a vector bundle on the spectral curve $\Sigma$. By a branched connection on  $F$ we mean a meromorphic connection  $\partial \colon F\to F\tensor \omega_\Sigma(R)$ with simple poles on the branch divisor $R\subset \Sigma$. The line bundle $\O_\Sigma(R)$ has a canonical branched connection $d\colon \O_{\Sigma}(R)\to \O_\Sigma(R)\tensor \omega_{\Sigma}(R)$ induced by the de Rham differential applied to functions on $\Sigma$ with  poles on $R$. This connection has a simple pole with residue $-1$ at each point of $R$.

The short exact sequence \eqref{nik2} induces an isomorphism $p^*(\omega_C)\isom \O_\Sigma(R)$. We therefore obtain   a canonical branched connection $d_\phi$ on $p^*(\omega_C)$.  It is uniquely defined by the condition that $\phi$ is a flat map of bundles  with meromorphic connection $(\O_{\Sigma},d)\to (p^*(\omega_C),d_\phi)$.
We say that a line bundle with branched connection $(L,\partial)$ on $\Sigma$   is  anti-invariant if  \begin{equation}\label{cancer}(L,\partial_L)\tensor \sigma^*(L,\partial_L)\isom (p^*(\omega_C),d_\phi).\end{equation}
 It follows that $\partial$ has a simple pole  with residue $-\tfrac{1}{2}$ at each point of $R$.

Let $J_{\br}^\hash(\Sigma)$ denote the space of line bundles $L$ on $\Sigma$ equipped with anti-invariant branched connections $\partial$. The group $J^2(C)\subset J^\hash(\Sigma)$ acts on this space by tensor product, and in analogy with \eqref{bed} we define \begin{equation*}\label{brrr}
P^\hash_{\br}(\Sigma)=J_{\br}^\hash(\Sigma)/J^2(C).\end{equation*}

\begin{lemma}
\label{lemmaabove}
There is a canonical isomorphism $P^\hash(\Sigma)\isom P^\hash_{\br}(\Sigma)$.
\end{lemma}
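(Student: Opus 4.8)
The plan is to exhibit mutually inverse maps between $P^\sharp_{br}(\Sigma)$ and $P^\sharp(\Sigma)$ induced by the operation of ``trading'' the canonical branched connection $\partial_{\can}$ on $p^*(\omega_C)$ against the trivial connection, using the meromorphic gauge transformation $\phi\colon \O_\Sigma\to p^*(\omega_C)$ with simple zeroes on $R$. Concretely, fix once and for all a square-root line bundle $K\in J(\Sigma)$ with $K^{\tensor 2}\isom p^*(\omega_C)$ together with a branched connection $\partial_K$ satisfying $(K,\partial_K)^{\tensor 2}\isom (p^*(\omega_C),\partial_{\can})$ (such $(K,\partial_K)$ exists: $p^*(\omega_C)$ is a square, and once a square-root bundle is chosen the connection extracting is unique as in the discussion preceding Lemma \ref{lemmaabove}); note $\sigma^*(K,\partial_K)\isom (K,\partial_K)$ since $\partial_{\can}$ is $\sigma$-anti-invariant up to the canonical identification and $K^2$ is $\sigma$-invariant, so $K\in J(\Sigma)^+$, hence lies in $J(C)\cdot J^2(C)$ modulo squares — in fact we only need $K$ itself, not its symmetry. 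Then define
\begin{equation}
\label{twist}
\Theta\colon J^\sharp_{br}(\Sigma)\lra J^\sharp(\Sigma)^-, \qquad (L,\partial_L)\longmapsto (L,\partial_L)\tensor (K,\partial_K)^{-1},
\end{equation}
and check this lands in $J^\sharp(\Sigma)^-$: the branched connection $\partial_L\tensor\partial_K^{-1}$ has simple poles with residue $-1-(-1)=0$ at each point of $R$ — using that $\partial_L$ is anti-invariant branched (residue $-1$, by the local computation of Section \ref{cumc}) and $\partial_K$ has residue $-1$ (being a square-root of $\partial_{\can}$, which has residue $-1$) — hence is an honest regular connection, so $\Theta(L,\partial_L)\in J^\sharp(\Sigma)$; and the anti-invariance relation $(L,\partial_L)\tensor\sigma^*(L,\partial_L)\isom(p^*(\omega_C),\partial_{\can})=(K,\partial_K)^{\tensor 2}$ translates precisely into $\Theta(L,\partial_L)\tensor\sigma^*(\Theta(L,\partial_L))\isom(\O_\Sigma,d)$, provided $\sigma^*(K,\partial_K)\isom(K,\partial_K)$, which I would arrange by choosing $K$ to be $\sigma$-invariant as a connected component / descending a suitable square-root from $C$ after a $J^2(C)$-twist; the inverse map is tensoring by $(K,\partial_K)$.

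Next I would verify $J^2(C)$-equivariance of $\Theta$: the group $J^2(C)\subset J^\sharp(\Sigma)$ acts on both sides by tensor product, and since tensoring commutes, $\Theta$ descends to an isomorphism of quotients $P^\sharp_{br}(\Sigma)\isom P^\sharp(\Sigma)$. One should note the ambiguity in the choice of $(K,\partial_K)$: any two choices differ by an element of $J^2(C)$ with its canonical flat connection (since a ratio of the two square-root bundles-with-connection squares to $(\O_\Sigma,d)$, hence is $2$-torsion with its unique square-trivial connection, and moreover $\sigma$-invariant, so lies in $J^2(C)$), so the resulting isomorphism of quotients is canonical, independent of the choice — this is where the quotient by $J^2(C)$ on both sides is essential, and it is exactly parallel to the two earlier ``same quotient'' arguments for $P(\Sigma)$ and $P^\sharp(\Sigma)$.

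The main obstacle I anticipate is the bookkeeping at the branch divisor $R$: one must be careful that the gauge twist by $(K,\partial_K)^{-1}$ genuinely cancels the residue $-1$ poles of an anti-invariant branched connection to produce a \emph{regular} connection (and conversely that tensoring a regular $J^\sharp(\Sigma)^-$-connection by $(K,\partial_K)$ reintroduces exactly the residue $-1$ simple poles required of a branched connection), so that $\Theta$ and its inverse are genuinely well-defined between the two spaces rather than some larger space of meromorphic connections. This amounts to combining the local model $\partial = d + \alpha(w)\,dw$ with $\alpha(w) = -\tfrac{1}{2w}+c+O(w)$ from Section \ref{cumc} with the corresponding local model for $\partial_K$ — whose residue is $-1$ because $(K,\partial_K)^{\tensor 2}\isom(p^*(\omega_C),\partial_{\can})$ and $\partial_{\can}$ has residue $-1$; here I would record explicitly that in a coordinate $w$ with $\sigma^*(w)=-w$ the section $\phi$ vanishes to order one at $w=0$, so $\partial_{\can}=d-w^{-1}\,dw+\cdots$ and $\partial_K = d - \tfrac12 w^{-1}\,dw+\cdots$, making the cancellation $\alpha(w) - (\text{residue of }\partial_K)/w = O(1)$ manifest. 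Everything else (injectivity, surjectivity, compatibility with the $J^2(C)$-action, canonicity) is then formal, mirroring the arguments already given for the non-branched Prym $P^\sharp(\Sigma)$ in Section \ref{prym}.
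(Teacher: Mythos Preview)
Your approach is essentially identical to the paper's: both tensor by the inverse of a fixed square root $(K,\partial_K)$ of $(p^*(\omega_C),\partial_{\can})$ and observe that the ambiguity in this choice is absorbed by the $J^2(C)$-quotient. The paper simply takes $K=p^*(\omega_C^{1/2})$ for a spin structure on $C$ from the outset, which is the clean way to guarantee $\sigma$-invariance; you arrive at the same choice after some detours.

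A few slips to tidy up. First, $K$ does not lie in $J(\Sigma)$, since $\deg p^*(\omega_C)=4g-4\neq 0$; it lies in $\Pic^{2g-2}(\Sigma)$. Second, your claim that $\sigma^*(K)\isom K$ follows from $K^{\tensor 2}$ being $\sigma$-invariant is false: twisting $K$ by any element of $J^2(\Sigma)\setminus J^2(\Sigma)^+$ gives a counterexample. You do need $\sigma$-invariance of $(K,\partial_K)$ for the image of $\Theta$ to land in $J^\sharp(\Sigma)^-$, and the only robust way to arrange this is precisely to take $K=p^*(\omega_C^{1/2})$, as you eventually suggest. Third, the residues of $\partial_L$ and $\partial_K$ along $R$ are $-\tfrac{1}{2}$, not $-1$: the local computation of Section~\ref{cumc} gives $\alpha(w)=-\tfrac{1}{2w}+c+O(w)$, and a square root of a connection with residue $-1$ has residue $-\tfrac{1}{2}$. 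Your later explicit formula gets this right, so the cancellation $-\tfrac{1}{2}-(-\tfrac{1}{2})=0$ goes through. Finally, for canonicity you need that two admissible choices of $(K,\partial_K)$ differ by an element of $J^2(C)$, not merely $J^2(\Sigma)$; this holds only once you restrict to $\sigma$-invariant (equivalently, pulled-back) square roots, using that $p^*\colon J(C)\to J(\Sigma)$ is injective.
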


\begin{proof}
Tensor product gives $J_{\br}^\hash(\Sigma)$ the structure of a torsor over $J^\hash(\Sigma)^-$, so  choosing a point $(L_0,\partial_0)\in J_{\br}^\hash(\Sigma)$  gives a non-canonical identification of the two spaces \begin{equation}
\label{ukraine}(L,\partial)\in J^\hash(\Sigma)\mapsto (L,\partial)\tensor (L_0,\partial_0)\in J^\hash_{\br}(\Sigma)^-,\end{equation} which descends to the quotients by $J^2(C)$. 
 To obtain a canonical bijection, take a spin structure $\omega_C^{1/2}$  on  $C$ and let $\partial_0$  be the unique branched connection on $L_0=p^*(\omega_C^{1/2})$ satisfying $(L_0,\partial_0)^{\tensor 2}=(p^*(\omega_C),d_\phi)$. Since $\omega_C^{1/2}$ is uniquely defined up to the action of $J^2(C)$, the resulting isomorphism $P^\hash(\Sigma)\isom P^\hash_{\br}(\Sigma)$ is canonically defined.  \end{proof}


\section{The spectral correspondence}
\label{four}

Let us fix a smooth projective curve $C$ of genus $g$ and a quadratic differential $Q\in H^0(C,\omega_C^{\tensor 2})$ with simple zeroes. Let $p\colon \Sigma\to C$ be the associated spectral curve with its covering involution $\sigma$. There is a very well known correspondence relating $\SL_2(\bC)$ Higgs bundles on $C$ to anti-invariant line bundles  on $\Sigma$. In this section we show how to extend this construction to include connections. The existence of this extension seems to be little known, although it is discussed by Donagi and Pantev in  \cite[Section 3.2]{DP} and also appears in a paper of Arinkin \cite{A}.

\subsection{Definition}
\label{works}

The  $\SL_2(\bC)$ spectral  correspondence \cite[Section 8]{Hitchin}, \cite[Section 3]{BNR} defines a bijection between
\begin{itemize}
\item[(i)]  rank 2 vector bundles $E$ on $C$, with $\det(E)\isom \O_C$, equipped with a  Higgs field $\Phi\colon E\to E\tensor \omega_C$ with $\operatorname{tr}(\Phi)=0$ and $\tfrac{1}{2}\tr(\Phi^2)=Q$,
\item[(ii)]line bundles $L$ on $\Sigma$ satisfying $L\tensor \sigma^*(L)\isom p^*(\omega_C)$.
\end{itemize}
The line bundle $L$ is obtained from  the eigen-decomposition of the pullback of the Higgs field $p^*(\Phi)$. In the reverse direction, the
  line bundle $L$  is sent  to the bundle $E=p_*(L)$ equipped with the Higgs field $\Phi$ which is the push-forward of the map $1\tensor \phi\colon L\to L\tensor p^*(\omega_C)$. 
  
We shall need an extension of this correspondence involving connections on the bundles $L$ and $E=p_*(L)$. More precisely,  the extension relates
\begin{itemize}
\item[(i)] connections $\nabla$ on $E$ inducing the trivial connection  on $\det(E)\isom \O_C$,
\item[(ii)] anti-invariant branched connections  $\partial$ on the line bundle $L$.
\end{itemize}
We do not claim that this correspondence is a bijection\todo{Give counter-example} in general, but it does define a birational map of the relevant moduli spaces: see Theorem \ref{birational} below.

The  extended correspondence is defined as follows. Take a bundle $E=p_*(L)$ on $C$ and a connection $\nabla$ on $E$ as in (i).
The  natural transformation $p^*p_*(L)\to L$ gives rise to a short exact sequence
\begin{equation}
\label{ses}0\lra p^*(E)\lRa{f} L\oplus \sigma^*(L) \lra \O_R\lra 0.\end{equation}
Since the map $f$ is an isomorphism away from the divisor $R\subset \Sigma$ there is a unique meromorphic connection $\tilde{\nabla}$ on  $L\oplus \sigma^*(L)$ with poles along $R$  such that the map $f$ becomes a map of bundles with meromorphic connection $p^*(E,\nabla)\to (L\oplus \sigma^*(L),\tilde{\nabla})$. 
The local calculation in the next subsection shows that the poles of $\tilde{\nabla}$ at the points of $R$ are simple. Taking the component of $\tilde{\nabla}$ along $L$ then gives the required branched connection $\partial$.

To prove that $(L,\partial)$ satisfies the identity \eqref{cancer}, take determinants of \eqref{ses} to get a map $\det(f)\colon \O_{\Sigma}\to L\tensor\sigma^*(L)$. The support of the cokernel is precisely $R$ and it  follows for degree reasons  that $\coker(\det(f))=\O_R$. The sequence \eqref{nik2} then shows that we can identify $L\tensor\sigma^*(L)$ with $p^*(\omega_C)$ in such a way that the map $\det(f)$ coincides with the map $\phi$. But then $\det(\tilde{\nabla})=\partial\tensor 1 + 1\tensor \sigma^*(\partial)$ is related to the trivial connection $\det(p^*(\nabla))$  by the meromorphic gauge change $\phi$, and hence coincides with $d_\phi$. 

\subsection{Local computation at branch-point}
\label{cumc}

Consider a $\sigma$-invariant neighbourhood $U\subset \Sigma$ of a branch-point $p\in R$. Choose a local co-ordinate $w\colon U\to \bC$ satisfying $\sigma^*(w)=-w$, and hence $w(p)=0$. Choose also a local non-vanishing section $s\in H^0(U,L)$. In terms of the basis of sections $(s,\sigma^*(s))$  we can write the induced connection $\tilde{\nabla}$ on $L\oplus\sigma^*(L)$ in the form
\begin{equation*}\tilde{\nabla}= d + \mat{\alpha(w)}{\beta(w)}{\gamma(w)}{\delta(w)}dw,\end{equation*}
where $\alpha,\beta,\gamma,\delta\colon U\to \bC$ are meromorphic functions defined in a neighbourhood of  $0\in \bC$, and regular away from $0$. The invariance of the pullback connection on $p^*(E)$ implies that $\gamma(w)=-\beta(-w)$ and $\delta(w)=-\alpha(-w)$.

The sequence \eqref{ses} shows that a section of $p^*(E)$ over $U$ is determined by sections of $L$ and $\sigma^*(L)$ which agree at the branch-point $p$. Since derivatives of regular sections of  $p^*(E)$ are also regular  sections of $p^*(E)$ we can write
\begin{equation}
\label{aone}\tilde{\nabla}_{\frac{\partial}{\partial w}}\vect{1}{1}= \vect{\alpha(w)+\beta(w)}{-\alpha(-w)-\beta(-w)}=\vect{c_+}{c_+}+O(w), \end{equation}\begin{equation}\label{btwo}\tilde{\nabla}_{\frac{\partial}{\partial w}}\vect{w}{-w}=\vect{1}{-1} +w\vect{\alpha(w)-\beta(w)}{\alpha(-w)-\beta(-w)}=\vect{c_-}{c_-}+O(w),\end{equation}
with $c_\pm \in \bC$. It follows that $\alpha(w)$ and $\beta(w)$ have at worst simple poles at $w=0$ and we can therefore  write
\begin{equation*}\alpha(w)=-\frac{a}{2w}+ c + O(w), \qquad \beta(w)=\frac{b}{2w}-d + O(w),\end{equation*}
with $a,b,c,d\in \bC$. Equation \eqref{aone} then implies that $b=a$ and $d=c$, and \eqref{btwo} implies that $a+b=2$.  Thus
\begin{equation*}\alpha(w)=-\frac{1}{2w}+ c + O(w), \qquad \beta(w)=\frac{1}{2w}-c + O(w).\end{equation*}
for some element $c\in \bC$. In particular $\tilde{\nabla}$ has simple poles on $R$. The induced branched connection on $L$ is given  by $\partial = d+\alpha(w) dw$. 

\subsection{Explicit description}
\label{explicit}

It will be useful later to have a more detailed description of  the extended spectral correspondence described in Section \ref{works}.

%

The functor $p_*\colon \Coh(\Sigma)\to \Coh(C)$ has a left adjoint $p^*$ and a right adjoint $p^!$. They are related by $p^!(-)=p^*(-)\tensor \omega_{\Sigma/C}$, where $\omega_{\Sigma/C}=\omega_{\Sigma}\tensor p^*(\omega_C^{\vee})$ is the relative dualising sheaf. The short exact sequence \eqref{nik}  induces an identification of $\omega_{\Sigma/C}$ with $\O_{\Sigma}(R)$. We can then identify $p^!$ with  the functor $p^*(-)\tensor \O_{\Sigma}(R)$.
  There are  natural transformations \begin{equation}\eta\colon p^*\circ p_*\to 1, \qquad \chi\colon 1\to p^!\circ p_*.\end{equation}  
 

Let $L$ be a  line bundle on $\Sigma$. Setting $f=(\eta_L,\eta_{\sigma^*(L)})$ and $g=(\chi_L,\chi_{\sigma^*(L)})$ gives maps
\begin{equation}p^*(E)\lRa{f} L\oplus \sigma^*(L)\lRa{g} p^*(E)\tensor \O_{\Sigma}(R).\end{equation}
We claim that they are mutually inverse away from the ramification divisor:

\begin{lemma}
Let $s_R\colon \O_{\Sigma}\to \O_{\Sigma}(R)$ denote the canonical inclusion. Then
\begin{equation}g\circ f = 1_{p^*(E)}\tensor s_R, \qquad f\tensor \omega_{\Sigma/C}\circ g = 1_{L\oplus \sigma^*(L)} \tensor s_R.\end{equation}
\end{lemma}

\begin{proof}
As in the previous section we can view local sections of $p^*(E)$ as consisting of pairs of local sections $u$ of $L$ and $v$ of $\sigma^*(L)$ whose restrictions to the branch divisor  $R$ coincide. Then $\eta_L\colon p^*(E)\to L$ sends such a pair   to the local section $u$, and  $\chi_L(-R)\colon L(-R)\to p^*(E)$ sends a local section $u$ of $L$ which vanishes on $R$ to the  pair $(u,0)$.  The result follows. \end{proof}

The extended correspondence  is defined by viewing the maps $f$ and $g$ as meromorphic gauge transformations, and using them to transfer the connection $p^*(\nabla)$ from $p^*(E)$ to $L\oplus\sigma^*(L)$. We then take the first component  to obtain a meromorphic connection $\partial $ on $L$ with poles along $R$.
The connection $\partial$  is therefore given by the composite map
\begin{equation}L\lRa{\chi_L} p^*(E)\tensor \O_{\Sigma}(R)\lRa{p^*(\nabla)\tensor 1 +1\tensor d} p^*(E)\tensor \O_{\Sigma}(R)\tensor \omega_{\Sigma}(R) \lRa{ \eta_L\tensor \omega_{\Sigma}(2R)} L\tensor \omega_{\Sigma}(2R).\end{equation}
Here $d$ denotes the canonical branched connection on $\O_{\Sigma}(R)$ induced by the de Rham differential, as in Section \ref{aunty}. 
 Note that although  the resulting connection $\partial$ {\it a priori} has double poles along $R$, the local calculation in Section \ref{cumc} shows that these poles are in fact of order one.


\section{Two diagrams of moduli spaces}
\label{five}

In this section we introduce a diagram of moduli spaces which will play a key role in our construction of the Joyce structure on the space $\cM(C,Q)$. These moduli spaces parameterise smooth projective curves $C$ of genus $g$ equipped with various extra structures involving vector bundles, Higgs fields and connections. We defer discussion  of the existence and basic properties of the moduli spaces to Appendix \ref{moduli}. The main result of this section focuses on the two most interesting maps in our diagram: $\alpha$ and $\beta_\epsilon$. We show that the first is birational and the second is generically {\'e}tale.

\subsection{The diagrams}

We use the following notation:
\begin{itemize}
\item $C$ is a  complex projective curve of genus $g$, equipped with a level $\ell$ structure,
\item $Q\in H^0(C,\omega_C^{\tensor 2})$ is a quadratic differential on $C$ with simple zeroes,
\item $p\colon \Sigma\to C$ is the spectral curve defined by the quadratic differential $Q$,
\item $E$ is a stable rank 2 vector bundle on $C$ with trivial determinant,
\item $\Phi$ is a trace-free Higgs field on $E$ such that $\tfrac{1}{2}\tr(\Phi^2)$ has simple zeroes,
\item $\nabla$  is a connection on $E$ inducing the trivial connection on $\det(E)$,
\item $L$ is a line bundle on $\Sigma$ such that $L\tensor \sigma^*(L)\isom p^*(\omega_C)$ and $p_*(L)$ is stable,
\item $\partial$ is an anti-invariant branched connection on $L$.
\end{itemize} 

\begin{equation}
\begin{gathered}\label{biggy}
\xymatrix@C=.9em{  & \cM(C,E,\nabla,\Phi) \ar[dl]_{\alpha} \ar[dr]^{\beta_\epsilon}\\
\cM(C,Q,L,\partial)  \ar[d]_{\pi_3} && \cM(C,Q,E,\nabla)\ar[d]_{\pi_2} \ar@{->}[rrr]^{\rho'} &&&\cM(C,E,\nabla)\ar[d]_{\pi_1}   \\
\cM(C,Q) \ar@{<->}[rr]^{=} && \cM(C,Q)\ar@{->}[rrr]^{\rho}&&& \cM(C)
}\end{gathered}
 \end{equation}
 
Let us fix a parameter $\epsilon\in \bC^*$ and contemplate the diagram \eqref{biggy}. Each moduli space 
 parameterises the indicated objects,  and the  maps $\rho, \rho'$ and $\pi_i$ are the obvious projections. Note that $\cM(C)$ and $\cM(C,Q)=M$ are the moduli spaces appearing in Section \ref{quad}.
The map $\alpha$ is the extended spectral correspondence discussed in the previous subsection, and $\beta(\epsilon)$ is defined by the rule
\begin{equation*}\beta_\epsilon(C,E,\nabla,\Phi)=(C,\tfrac{1}{2}\tr(\Phi^2), E, \nabla-\epsilon^{-1}\Phi).\end{equation*}
We refer the reader to Appendix \ref{moduli} for further details on the moduli spaces appearing in  \eqref{biggy}. Given our standing assumptions $g>1$ and $\ell>2$, all the spaces appearing are smooth quasi-projective complex varieties which co-represent the relevant moduli functors.  

%

Consider the bundle $\cH^{\hash}=\cH/(2\pi i)\cH^{\bZ}$ over  $M$ with  fibres  $\tilde{H}^1(\Sigma,\bC^*)^-$. Under the isomorphism  of Theorem \ref{existperiods} it corresponds to the quotient \eqref{covid}. We now consider a second diagram of spaces \eqref{seconddiag} which can be attached to the left-hand side of \eqref{biggy}.

\begin{equation}
\label{seconddiag}
\begin{gathered}
\xymatrix@C=1em{  
 T_{M}^{\hash}\ar[rrrr]^{\nabla^{\GM}(\delta)}\ar[d]_{\pi_5}&&&& \cH^{\hash}\ar[d]_{\pi_4}&&& \cM(C,Q,L,\partial)\ar[d]_{\pi_3} \ar[lll]_{\tau\ \ }  \\
M \ar@{<->}[rrrr]^{=} &&&& M\ar@{<->}[rrr]^{=}  &&& \cM(C,Q)
}
\end{gathered}\end{equation}

Note that the fibre of the map $\pi_3$ over a pair $(C,Q)$ is the open subset of the space $J_{\br}^{\hash}(\Sigma)$  defined by the condition that $p_*(L)$ is stable. The map $\tau$ is then given on fibres by the composite of the quotient map $J_{\br}^{\hash}(\Sigma)\to P^\hash_{\br}(\Sigma)$ with the isomorphism of Lemma \ref{lemmaabove} and the Riemann-Hilbert isomorphism  \eqref{mapp}. It is an {\'e}tale map of complex manifolds.

\subsection{Generic finiteness results}
\label{finite}

The following result contains the two main non-trivial facts we will need for the construction of our Joyce structure.

\begin{thm} \label{birational}Fix a genus $g>1$ and a level $\ell>2$. Then\begin{itemize}
\item[(i)] the map $\alpha$ is birational,
\item[(ii)] the map $\beta_\epsilon$ is generically {\'e}tale.\todo{
There is an involution $\iota$ of the space $\cM(C,Q,L,\partial)$ which fixes $(C,Q)$ and sends $(L,\partial)$ to $\sigma^*(L,\partial)$. A possible strengthening of Theorem \ref{birational} states that  the maps $\alpha,\beta_\epsilon$ induce a birational isomorphism $\beta_\epsilon\circ \alpha^{-1}\colon \cM(C,Q,L,\partial)/\<\iota\>\dashrightarrow \cM(C,Q,E,\nabla_\epsilon)$.}
\end{itemize}\end{thm}

\begin{proof} Note first that the sources and targets of the maps $\alpha$ and $\beta_\epsilon$ are smooth quasi-projective varieties of the same dimension. By generic smoothness and the dimension theorem, for (i)  it will be enough to prove that the general fibre of $\alpha$  is a single point, and for (ii) that the general fibre of $\beta_\epsilon$ is finite, or equivalently, that $\beta_\epsilon$ is dominant.

For (i), suppose we have two connections on $E$ giving rise to the same connection on $L$. In terms of the local computation of Section \ref{cumc} this means that we have two possible $\beta_i$ with the same  $\alpha$, and in particular, the same $c\in \bC$. Then the difference $\beta_2(w)-\beta_1(w)$ is regular and vanishes at the branch-point $w=0$. Globally the difference $(\beta_2(w)-\beta_1(w))dw$ corresponds to a section $\sigma^*(L)\to L\tensor \omega_{\Sigma}$. 
But since $\Sigma$ has genus $4g-3$, and $R$ has degree $4g-4$ 
\begin{equation}\chi(\sigma^*(L),L\tensor \omega_{\Sigma}(-R))=\chi(\omega_{\Sigma}(-R))=(4g-3)-1-(4g-4)=0,\end{equation} and it follows that for generic $L$ any such section is zero.\todo{Small issue here.} 

For (ii), note first that by the relation $\nabla_\epsilon=\nabla-\epsilon^{-1}\Phi$, the source of $\beta_\epsilon$ may be equivalently viewed as paramaterising $C,E,\Phi$ and $\nabla_\epsilon$, and therefore $\beta_\epsilon$ is a base-change of the map
\begin{equation}\gamma\colon \cM(C,E,\Phi)\to \cM(C,E,Q)\end{equation} defined by $Q=\tfrac{1}{2}\tr(\Phi^2)$. That this map is dominant follows from the proof of \cite[Theorem 1]{BNR}. Namely, we first extend $\gamma$ by dropping the condition that $Q$ has simple zeroes, and then show that the resulting map is dominant. But by the results of \cite{DL}, for each curve $C$ there exists a bundle $E$ for which the fibre of $\gamma$  over the point $(C,E,0)$  is a single point. \end{proof}

Note that the proof of Theorem \ref{birational} actually gives more: the open locus over which the map $\alpha$ is birational intersects each fibre $\pi_3^{-1}(m)$, and similarly, the open locus over which the map $\beta_\epsilon$ is {\'e}tale intersects each fibre $\pi_2^{-1}(m)$. We record this as

\begin{cor}
\label{cor}
Let $C$ be a smooth projective curve of genus $g>1$, and $Q\in H^0(C,\omega_C^{\tensor 2})$ a quadratic differential with simple zeroes. Let $p\colon \Sigma\to C$ be the associated spectral curve. Then for each $\epsilon\in \bC^*$, the composite $\beta_\epsilon\circ\alpha^{-1}$ defines a dominant rational map
\begin{equation}\gamma_\epsilon\colon \cM_\Sigma(L,\partial)\dashrightarrow \cM(E,\nabla)\end{equation}
between the moduli space of anti-invariant branched connections on $\Sigma$, and the moduli space of rank 2 connections on $C$ with trivial determinant. \qed \end{cor}


\section{Symplectic forms and their preservation}
\label{six}

In this section we introduce the relevant symplectic forms on the moduli spaces and show that our maps preserve them.

\subsection{The Atiyah-Bott symplectic form} 

Let $E$ be a bundle on $C$ and $\nabla\colon E\to E\tensor \omega_C$ a connection. There is an associated de Rham complex
\begin{equation}0\lra \lEnd_{\O_C}(E)\lRa{\nabla} \lEnd_{\O_C}(E)\tensor \omega_C\lra 0.\end{equation}
We denote by $\bH^i_{dR}(C,\nabla)$ the $i$th hypercohomology of this complex. The long exact sequence in hypercohomology gives relations
\begin{equation}\bH^0_{dR}(C,\nabla)= H^0(C,\lEnd_{\O_C}(E)),\quad \bH^2_{dR}(C,\nabla)= H^1(C,\lEnd_{\O_C}(E)\tensor \omega_C),\end{equation}
and a short exact sequence
\begin{equation}
\label{snow}0\lra H^0(C,\lEnd_{\O_C}(E)\tensor \omega_C)\lra \bH^1_{dR}(C,\nabla)\lra H^1(C,\lEnd_{\O_C}(E))\lra 0.\end{equation}

Consider  the moduli space $\Flat_C(r)$ of rank $r$ stable bundles on $C$ equipped with a flat connection. 
It is a smooth, quasi-projective scheme whose tangent space at a point $(E,\nabla)$ is the group $\bH^1_{dR}(C,\nabla)$.   There is a canonical isomorphism $\int_C\colon H^1(C,\omega_C)\to \bC$. The Atiyah-Bott form  is the composite of the wedge product
\begin{equation}\bH^1_{dR}(C,\nabla)\times \bH^1_{dR}(C,\nabla)\lRa{\wedge} \bH^2_{dR}(C,\nabla)\end{equation}
with the canonical maps
\begin{equation}\bH^2_{dR}(C,\nabla)\lRa{\isom} H^1(C,\lEnd_{\O_C}(E)\tensor \omega_C)\lRa{tr} H^1(C,\omega_C)\lRa{\int_C} \bC.\end{equation}

There is a forgetful map \begin{equation}
\label{forge}
\pi\colon\Flat_C(r)\to \Bun_C(r)\end{equation} to the moduli space of rank $r$, degree $0$ stable bundles on $C$. The tangent space at a point of $\Bun_C(r)$ is the cohomology group $\Ext^1_C(E,E)$.  The map $\pi$ is an affine bundle for the vector bundle over $\Bun_C(r)$ with fibres $\Hom_C(E,E\tensor \omega_C)$, which can be identified with the cotangent bundle of $\Bun_C(r)$ using the Serre duality pairing
\begin{equation}
\label{serre}\Hom_C(E,E\tensor\omega_C)\times \Ext^1_C(E,E)\lra \Ext^1(E,E\tensor \omega_C)\lRa{\tr} H^1(C,\omega_C)\lRa{\int_C} \bC.\end{equation}
 The short exact sequence \eqref{bass} defined by the derivative of the map $\pi$ can be identified with \eqref{snow}, and the following 
result is then immediate.

\begin{lemma}
The Atiyah-Bott form on $\Flat_C(r)$ is uniquely characterised by the following two properties:
\begin{itemize}
\item[(i)] The fibres of the map \eqref{forge} are Lagrangian,
\item[(ii)] At any point $(E,\nabla)$ the induced pairing between the vertical tangent space for the map \eqref{forge} and the  cotangent space $T^*_E \Bun_C(r)$ is the Serre duality pairing \eqref{serre}.
\end{itemize}
\end{lemma}

Note that in the case of bundles of rank $r=1$ the de Rham complex for $(E,\nabla)$ becomes the usual de Rham complex of $C$ and hence $\bH^1_{dR}(C,\nabla)=H^1(C,\bC)$. The Atiyah-Bott pairing can then be identified with the intersection form.

\subsection{Properties of  traces}

Let $X$ be a variety, and  $E$ a vector bundle on $X$. Taking the trace of endomorphisms defines a map $\tr\colon \lEnd_{\O_X}(E)\to \O_X$. Tensoring by a   line bundle $L$ and taking cohomology gives linear maps
 \begin{equation}\tr_E\colon \Ext_X^p(E,E\tensor L)\to H^p(X,L).\end{equation}
 In the proof of Theorem \ref{symplectic} we shall need the following properties of these maps, whose proofs we leave to the reader:
\begin{itemize}
\item[(T1)] Given bundles $E,F$ and a line bundle $L$, and elements $g\in \Ext_X^p(E,F)$ and $h\in \Ext_X^q(F,E\tensor L)$ there is an identity \begin{equation}\tr_E(h\circ g)=\tr_F((g\tensor L)\circ h)\in H^{p+q}(X,L).\end{equation}
\item[(T2)] Given  a bundle $E$ and line bundles $L,M$ and elements $f\in \Ext_X^p(E,E\tensor L)$ and $g\in \Ext_X^q(L,M)$ there is an identity
\begin{equation}g\circ \tr_E(f)=\tr_E((1_E\tensor g)\circ f).\end{equation}
\item[(T3)] Suppose given two bundles $E_1,E_2$ and a line bundle $L$, and an element $f\in \Ext^p_X(G,G\tensor L)$ where $G=E_1\oplus E_2$. Let $s_i\colon E_i\to G$  be the canonical inclusions, and $\pi_i\colon G\to E_i$   the canonical  inclusions. Then
\begin{equation} \tr_{G}(f)=\tr_{E_1}(\pi_1\circ f\circ s_1) + \tr_{E_2}(\pi_2\circ f\circ s_2).\end{equation}
\item[(T4)] Given a map of varieties $f\colon X\to Y$, a bundle $E$ on $Y$, a line bundle $L$ on $Y$ and an element $s\in \Ext^p(E,E\tensor L)$, there is an identity \begin{equation}\tr_{f^*(E)}(f^*(s))=f^* (\tr_E(s))\in H^p(X,f^*(L)).\end{equation}
\end{itemize}

\subsection{Preservation of symplectic forms}

Each of the maps $\pi_1,\cdots, \pi_5$ appearing in the diagram \eqref{biggy} and \eqref{seconddiag} carries a natural relative symplectic form, and we claim that the horizontal maps in these diagrams preserve these forms. For the most part these statements are rather obvious, but Theorem \ref{symplectic} below is quite non-trivial.

Let us consider each vertical map in turn. For the map $\pi_3$ we take the Atiyah-Bott form for rank 1 bundles with connection, restricted to the subset of anti-invariant connections. The tangent space to the fibres is $H^1(\Sigma,\bC)^-$ and the symplectic form is just the usual intersection form. We then take the same form on $\pi_4$ and by definition of the symplectic form $\omega$ on $M$ this induces the required relative symplectic form on the map  $\pi_5$.  

The map $\pi_1$ is equipped with a relative form whose restriction to each fibre is the Atiyah-Bott form discussed above. Note that since we are in the $\SL_2(\bC)$ setting we should impose trace free conditions appropriately. Since the right-hand square in \eqref{biggy} is Cartesian this induces a relative symplectic form on $\pi_2$.
In terms of the map $\gamma_\epsilon=\beta_\epsilon\circ\alpha^{-1}$ of Corollary \ref{cor} what is then left to prove is

\begin{thm}
\label{symplectic}
The pullback of the Atiyah-Bott form on $\cM_C(E,\nabla)$ by the rational map $\gamma_\epsilon$ is twice the Atiyah-Bott form on $\cM_\Sigma(L,\partial)$.
\end{thm}

\begin{proof}
We freely use notation from Section \ref{explicit}. 
At the level of bundles $\gamma_\epsilon$ is defined by $E=p_*(L)$. We have a map of short exact sequences in which all vertical arrows are isomorphisms
\begin{equation*}
\begin{gathered}
\xymatrix@C=1em{  
 \Hom_{\Sigma}(L,L\tensor \omega_\Sigma)_0 \ar[rr] && T_{(L,\partial)} \cM_\Sigma(L,\partial) \ar[rr] && \Ext_{\Sigma}^1(L,L)_0\ar[d]^{p_*}
 \\
 \Hom_{C}(E,E\tensor \omega_C)_0\ar[u]^{\gamma_{\epsilon,*}} \ar[rr] && T_{(E,\nabla)} \cM_C(E,\nabla)\ar[u]^{\gamma_{\epsilon,*}} \ar[rr] && \Ext_{C}^1(E,E)_0
 }\end{gathered}
 \end{equation*}

Take $L$ an anti-invariant line bundle on $\Sigma$ and set $E=p_*(L)$. Take elements
\begin{equation}v\in \Hom_C(E,E\tensor\omega_C), \qquad w\in \Hom^1_\Sigma(L,L).\end{equation}
We assume that $\tr(v)=0$ and $w$ is anti-invariant, meaning that $w\tensor 1_{\sigma^*(L)} + 1_L\tensor \sigma^*(w)=0$. What we must show is that
\begin{equation}\int_C \tr_E(v\circ p_*(w))= \int_{\Sigma} \tr_L(\alpha_*(v)\circ w).\end{equation}

By construction, $\alpha_*(v)$ is given by the composite 
\begin{equation}L\lRa{\chi_L} p^*(E)\tensor \omega_{\Sigma/C} \lRa{p^*(v)\tensor \omega_{\Sigma/C}} p^*(E)\tensor \omega_{\Sigma} \lRa{ \eta_L\tensor \omega_{\Sigma}} L\tensor \omega_{\Sigma}.\end{equation}  
Thus we have
\begin{equation}\int_{\Sigma} \tr_L(\alpha_*(v)\circ w)=\int_\Sigma \tr_L(\eta_L\tensor \omega_{\Sigma/C}\circ p^!(v)\circ \chi_L\circ w).\end{equation}

On the other hand, using the relation $\int_C \psi=\int_{\Sigma}(s\circ p^*(\psi))$ valid for all elements $\psi\in H^1(C,\omega_C)$ we can write
 \begin{equation}\int_{C} \tr_E(v\circ p_*(w))= \int_{\Sigma} s\circ p^*   \tr_{E} (v\circ p_*(w)) \stackrel{(T4)}{=}\int_{\Sigma} s\circ  \tr_{p^*(E)} (p^*(v)\circ p^* p_*(w))\end{equation}\begin{equation} \stackrel{(T2)}{=}\int_{\Sigma} \tr_{p^*(E)} \big((1_{p^*(E)}\tensor s)\circ p^*(v)\circ p^* p_*(w)\big)  =\int_{\Sigma} \tr_{p^*(E)} (g\circ f\circ p^*(v)\circ p^* p_*(w))\end{equation}\begin{equation} \stackrel{(T1)}{=}\int_{\Sigma} \tr_{L\oplus \sigma^*(L)} (f\tensor \omega_{\Sigma/C}\circ p^!(v)\circ p^! p_*(w)\circ g)\end{equation}
 \begin{equation}\label{early}\stackrel{(T3)}{=}\int_{\Sigma} \tr_{L} (\eta_L\tensor \omega_{\Sigma/C}\circ p^!(v)\circ \chi_L\circ w)+\int_{\Sigma} \tr_{\sigma^*(L)} (\eta_{\sigma^*(L)}\tensor \omega_{\Sigma/C}\circ p^!(v)\circ \chi_{\sigma^*(L)}\circ \sigma^*(w)),\end{equation}
where we used the naturality of $\chi$ as well as (T3) in the final step. The same argument together with the assumption $\tr_E(v)=0$ shows that
\begin{equation}\int_{\Sigma} \tr_L(\eta_L\tensor \omega_{\Sigma/C}\circ p^!(v)\circ \chi_L)+\int_{\Sigma} \tr_{\sigma^*(L)}(\eta_{\sigma^*(L)}\tensor \omega_{\Sigma/C}\circ p^!(v)\circ \chi_{\sigma^*(L)})=0.\end{equation}
For line bundles (T2) shows that the trace of the composite is the composite of the traces. Using the assumption that
$w$ is anti-invariant we see that the two terms in \eqref{early} are equal, which proves the claim.
\end{proof}


\section{Construction of the Joyce structure}
\label{seven}

In this section we finally construct the required meromorphic Joyce structure on the complex manifold $M=\cM(C,Q)$. The key ingredient is the isomonodromy connection on the map $\pi_1$, which is both flat and symplectic. We pull this back across the diagrams \eqref{biggy} and \eqref{seconddiag} using the  generic finiteness results of  Theorem \ref{birational}. This then gives the required non-linear connections $h_\epsilon$ on the tangent bundle of $M$.

\subsection{Non-linear connections}

Let $\pi\colon X\to Y$ be a smooth map of varieties. We can define the notion of a non-linear connection on $\pi$ exactly as before. Namely, the derivative of $\pi$ gives a short exact sequence
\begin{equation*}\label{bass2}0\lra T_{X/M}\lRa{i} T_X\lRa{\pi_*} \pi^*(T_Y)\lra 0,\end{equation*}
and we define a non-linear connection on $\pi$ to be a map of bundles $h\colon \pi^*(T_Y)\to T_X$ satisfying $\pi_*\circ h=\id$.
Let $D\subset X$ be an effective Cartier divisor with corresponding section $s_D\colon \O_X\to \O_X(D)$. We define a meromorphic connection on $\pi$ with poles along  $D$ to be a map of bundles $h\colon \pi^*(T_Y)\to T_X(D)$ satisfying $(\pi_*\tensor\O_X(D))\circ h=1_{\pi^*(T_Y)}\tensor s_D$.

 We shall need the following simple facts whose proofs we leave to the reader:
\begin{itemize}
\item[(C1)] Suppose given smooth maps $\pi\colon X\to Y$ and $\eta\colon Y\to Z$ and connections $h\colon \pi^*(T_Y)\to T_X$ on $\pi$ and $j\colon \eta^*(T_Z)\to T_Y$  on $\eta$. Then the composite  $h\circ \pi^*(j)\colon \pi^*\eta^*(T_Z)\to T_X$ is a connection on  the map $\eta\circ \pi\colon X\to Z$. 

\item[(C2)] If $\pi\colon X\to Y$ is {\'e}tale then $\pi_*^{-1}\colon \pi^*(T_Y)\to T_X$ is the unique connection on $\pi$.

\item[(C3)] Given a Cartesian square
\begin{equation*}
\begin{gathered}
\xymatrix@C=1em{  
 W\ar[d]_{\eta} \ar[rr]^{g}&& X\ar[d]^{\pi}   \\
 Z \ar[rr]^{f} && Y}
\end{gathered}\end{equation*}
with $\pi$ smooth, and a connection $h\colon \pi^*(T_Y)\to T_X$  on $\pi$, there is a unique connection $j\colon \eta^*(T_Z)\to T_W$ on $\eta$ such that \begin{equation}g_*\circ j =g^*(h)\circ \eta^*(f_*)\colon \eta^*(T_Z)\to g^*(T_X).\end{equation}


\item[(C4)] Suppose given a smooth map $\pi\colon X\to Y$ and an open subset $U\subset X$. Denote the inclusion map by $i\colon U\to X$, and set $\pi_0=\pi\circ i$. Then any connection $h_0\colon \pi_0^*(T_Y)\to T_U$ on $\pi_0$ extends to a meromorphic connection on $\pi$. More precisely, there is an effective divisor $D\subset X$ and a meromorphic connection $h\colon \pi^*(T_Y)\to T_X(D)$ on $\pi$ with poles along $D$ such that $i^*(h)=\id_{T_U}\tensor i^*(s_D)\circ h_0$.
\end{itemize}

\subsection{Isomonodromy connection}

Return to the diagram \eqref{biggy}, and recall that we are imposing the condition that the bundles $E$ are stable. In particular, the map 
\begin{equation}\pi_1\colon \cM(C,E,\nabla)\to \cM(C),\end{equation}
is smooth. There is  a flat non-linear connection on  this map known as the isomonodromy connection and constructed as follows. There is a map \begin{equation}\pi_0\colon \cM(C,\cE)\to \cM(C)\end{equation} whose fibre over  a curve $C$ is the space of $\SL_2(\bC)$ local systems on  $C$. The relative Riemann-Hilbert correspondence \cite[Theorem 2.23]{Del} gives an open embedding  $\mu\colon \cM(C,E,\nabla)\to \cM(C,\cE)$ sending a bundle with connection $(E,\nabla)$  to its monodromy local system.  The fact that the universal family of curves over $\cM(C)$ is locally trivial as a family of smooth surfaces defines a   Gauss-Manin connection on $\pi_0$, which by pullback along $\mu$ then induces the isomonodromy connection on $\pi_1$.

We shall need two properties of the isomonodromy connection. Firstly, despite the fact that the monodromy map $\mu$ is highly transcendental, the isomonodromy connection is nonetheless an algebraic object. The basic reason is that the condition for a relative connection $(\cE,\nabla)$ on a family of curves curve $f\colon \cC\to T$  to be isomondromic can be rephrased as the lifting of the relative flat connection on the bundle $\cE$ over $\cC$ to an actual flat connection, and the isomonodromy connection then arises from a zero curvature condition. A more abstract approach using crystals was explained by Simpson \cite[Section 8]{Simp2}.

The second property of the isomonodromy connection we need is that it is symplectic. Goldman \cite{Gold} proved that the Riemann-Hilbert map $\mu$ takes the Atiyah-Bott symplectic form on $\cM_C(E,\nabla)$ to a natural symplectic form on the character variety defined using group cohomology. The only important point for us is that this second symplectic form is defined topologically, and is therefore independent of the complex structure on $C$. It follows that the parallel transport maps for the isomonodromy connection preserve the Atiyah-Bott symplectic form on the fibres of $\pi_1$.

The right-hand square in \eqref{biggy}  is Cartesian so we can  pull back the isomonodromy connection using (C3) to obtain a  connection on $\pi_2$. Using (C1), (C2) and Theorem \ref{birational} we obtain a connection on an open subset of  $\pi_3$.
Note that the bundle of groups $J^2(C)$ over $\cM(C)$ acts by tensor product on the upper row of the diagram \eqref{biggy}. Tensoring $(E,\nabla)$ by an element $(P,\partial_P)\in J^2(C)$ multiplies the monodromy by  a homomorphism $H_1(C,\bZ)\to \{\pm 1\}$.  This action clearly preserves the isomonodromy connection,  and the connection on $\pi_3$ therefore descends along the  map $\tau$ appearing in \eqref{seconddiag}. Using (C4) we can extend it to a meromorphic connection. Continuing  across this diagram, we finally obtain a  meromorphic connection  $h_\epsilon$ on a dense open subset of   $\pi_5\colon T_M^{\hash}\to M$. 

\subsection{Joyce structure}
In the last section we showed how to construct, for each $\epsilon\in \bC^*$, a  non-linear connection $h_\epsilon$ on a dense open subset of $\pi_5$.  This connection is flat and symplectic  because it is a pullback of the isomonodromy connection. To produce a pre-Joyce structure it remains to prove that as $\epsilon\in \bC^*$ varies these connections form a $\nu$-pencil, i.e. that $h_\epsilon=h+\epsilon^{-1} v$ where $h=h_\infty$.

Consider the automorphism
\begin{equation*}r_\epsilon\colon \cM(C,E,\nabla,\Phi)\to \cM(C,E,\nabla,\Phi), \qquad (C,E,\nabla,\Phi)\mapsto (C,E,\nabla+\epsilon^{-1}\Phi,\Phi).\end{equation*}
Then  $\beta_\epsilon=\beta_\infty\circ r_{-\epsilon}$ and  it follows that $h_\epsilon=(r_\epsilon)_*\circ h$ as maps of bundles $\pi^*(T_M)\to T_X$. It will  be enough to show that $(r_\epsilon)_*=\id+\epsilon^{-1} (v\circ\pi_*)$. 

Recall the tautological differential $\lambda\in H^0(\Sigma,\omega_\Sigma)$ of Section \ref{quad} whose  periods \eqref{periodcoords} around a basis of cycles $(\gamma_1,\cdots,\gamma_n)\subset  H_1(\Sigma,\bZ)^-$ define local flat integral co-ordinates $z_i$ on $M$.  After  transferring along the birational map $\alpha$  the map $r_\epsilon$ becomes
\begin{equation*}r_\epsilon\colon \cM(C,Q,L,\partial)\to \cM(C,Q,L,\partial), \qquad (C,Q,L,\partial)\mapsto (C,Q,L,\partial+\epsilon^{-1}\lambda).\end{equation*}
Thus  $r_\epsilon$ is the operation of tensoring  $(L,\partial_L)\in J^\hash_{\br}(\Sigma)$ with   $(\O_{\Sigma},d+\epsilon^{-1}\lambda)\in J^\hash(\Sigma)^-$.    Note that the monodromy of this second connection  around a cycle $\gamma_i$ is just $\exp(\epsilon^{-1}z_i)$. We now transfer the automorphism $r_\epsilon$ across the diagram \eqref{seconddiag}.  The map $\sigma$ takes  a point of the space $\cM(C,Q,L,\partial)$ to the monodromy of the product \eqref{ukraine}. Thus  taking  fibre co-ordinates $\theta_i$ as in Section \ref{prejoyce}, we finally arrive at the automorphism   of $X=T_M$ given in local co-ordinates by $\theta_i\mapsto \theta_i+\epsilon^{-1} z_i$, and the claim  follows.


The final step is to check the compatibility between the period structure and the pre-Joyce structure.  Conditions (J1) and (J2)  of Definition \ref{joyce} hold by construction. It remains to consider (J3) and (J4).

For (J3) we consider the $\bC^*$ action on $M=\cM(C,Q)$ for which $t\in \bC^*$ acts by  $t\cdot (C,Q)=(C,t^2\cdot Q)$. Combining the induced action on $X=T_M$ with the rescaling action on the fibres as in the paragraph before Lemma \ref{jen} gives a $\bC^*$ action on $X=T_M$. It is not hard to see that this  descends to $X^{\hash}$, and that when transferred across the diagrams \eqref{biggy} and \eqref{seconddiag} it becomes the $\bC^*$ action on  $\cM(C,E,\nabla,\Phi)$ which sends $\Phi\mapsto t\Phi$ and leaves $(C,E,\nabla)$ fixed.\todo{Explain}  
We denote by $m_t\colon M\to M$ and $n_t\colon X\to X$ the resulting actions of $t\in \bC^*$. Note that the involution $\iota\colon X\to X$ coincides with $n_{-1}$.\todo{Explain}

After Lemma \ref{jen} to check (J3) it will be enough to show that for any vector field $v$ on $M$ we have $(n_t)_*(h(v))=h((m_t)_*(v))$. Taking $t=-1$ this will also imply  (J4). To prove this identity it is enough to show that $(n_t)_*(h(v))$ is a horizontal vector field for $h$. But this is clear by construction of $h$ since in the diagram \eqref{biggy} $\rho'\circ n_t=\rho'$.\todo{Explain that we can put an algebraic structure on $X^\hash$ so that everything is algebraic.}

\subsection{Restriction to the zero section}
\label{martha}

It was explained in \cite[Section  3.2]{Strachan} that the  involution property (iii) above implies that, when restricted to the zero-section $M\subset X=T_M$, the holomorphic Levi-Civita connection of the complex \hk structure on $X$ induces a flat, torsion-free connection $\nabla^J$ on the  tangent bundle of $M$. This connection was referred to in \cite[Section  7]{RHDT2} as the  linear Joyce connection, and is given in co-ordinates  by
\begin{equation}
\label{linearjoyce}\nabla^J_{\frac{\partial}{\partial z_i}} \Big(\frac{\partial}{\partial z_j}\Big)=\sum_{p,q} \eta_{qp}\cdot \frac{\partial^3 W}{\partial \theta_i \partial \theta_j \partial \theta_p}\Big|_{\theta=0}\cdot \frac{\partial}{\partial z_q}.
\end{equation}

Note however, that locating the poles of the Joyce structure constructed above  is a subtle problem, and in particular it is not clear whether the structure is regular along the zero-section $M\subset X=T_M$. This submanifold  $M\subset X$  is the fixed locus of the  involution $\iota$, and when transferred across the diagram \eqref{seconddiag} corresponds to  the multi-section of $\pi_3$ consisting of points satisfying $(L,\partial_L)^{\tensor 2}=(p^*(\omega_C),\partial_{\can})$. 
Understanding the properties of the Joyce structure at these points is difficult however, because they lie in the exceptional locus of the birational map $\alpha$.  Note that  in the one example that has been computed in detail \cite{A2}, the connection $\nabla^J$ is indeed well-defined, and turns out to be quite natural.  


\section{Good Lagrangian submanifolds}
\label{slices}

The spaces of complex and K{\"a}hler  parameters on a compact Calabi-Yau threefold are expected to  appear as complex Lagrangian submanifolds in the  stability space of the associated CY$_3$ triangulated category.  
It has been a long-standing question to try to abstractly characterise these submanifolds in stability space (see e.g. \cite[Section 7]{spaces}). 
  In this section we give a general definition of a  good Lagrangian submanifold $B\subset M$ in the base of a Joyce structure. We then prove that for the Joyce structures constructed in Section \ref{seven},  the submanifolds in $M=\cM(C,Q)$ obtained by fixing the curve $C$ and varying the quadratic differential $Q$ are  good Lagrangians in this sense. 

\subsection{General definition}

Consider a Joyce structure on a complex manifold $M$  and a complex Lagrangian submanifold $B\subset M$. Consider the normal bundle $\pi\colon N_B\to B$ fitting into the sequence
\begin{equation}
\label{normal2}0\lra T_B\lRa{i} T_M|_B\lRa{k} N_B\lra 0.\end{equation}
Recall the pencil of  connections $h_\epsilon=h+\epsilon^{-1}v$ on the bundle $\pi\colon X=T_M\to M$. For any complex submanifold $B\subset M$, and any $\epsilon^{-1}\in \bC$, the   connection $h_\epsilon$  restricts to a connection  $h_\epsilon|_B$ on the bundle $X_B=T_M|_B\to B$.



 \begin{definition}
 \label{good}
A complex Lagrangian  submanifold $B\subset M$ will be called good if  the restricted connection  $h_\epsilon|_B$  descends via the map $k\colon X_B\to N_B$ to a  connection $n$ on the normal bundle $\pi\colon N_B\to B$.
\end{definition}

To explain this condition in more detail, take  $x\in X_B$ with $\pi(x)=b\in B$.  The bundle map $k$ defines  a map of complex manifolds $k\colon X_B\to N_B$, and we set $y=k(x)\in N_B$.\begin{equation}
\label{snooze}
\begin{gathered}
\xymatrix@C=1em{  
 X\ar[d]_{\pi} && X_B\ar[d]\ar@{_{(}->}[ll]\ar[rr]^{k}&& N_B\ar[d]^{\pi}   \\
M  && B\ar@{_{(}->}[ll] \ar@{<->}[rr]^{=}  && B
}
\end{gathered}\end{equation}

Given a vector $w\in T_b B\subset T_b M$ the connection $h_\epsilon$ defines a lift $h_\epsilon (w)\in  T_x X_B\subset T_x X$, and we define $n(w)=k_*(h_\epsilon(w))\in T_y N_B$. Note that $n(w)$   is independent of $\epsilon$, since $k_*(v(w))=0$. The condition of Definition \ref{good} is that $n(w)$ depends only on $y\in N_B$, not on the element $x\in X_B$ satisfying $k(x)=y$. When this condition holds the map $n$ defines a connection on $\pi\colon N_B\to B$. 

\begin{lemma}
\label{goody}
If $B\subset M$ is a good Lagrangian then the induced connection $n$ on  the normal bundle $\pi\colon N_B\to B$ is flat. \end{lemma}

\begin{proof}
 Recall that if $f\colon M\to N$ is a map of complex manifolds, and $u,w$ are vector fields on $M,N$ respectively, then $u,w$ are said to be $f$-related if $f_*(u_m)=w_{f(m)}$ for all $m\in M$. Given vector fields $u_1,u_2$ on $M$ which are $f$-related to vector fields $w_1,w_2$ on $N$ it is easily checked that $[u_1,u_2]$ is $f$-related to $[w_1,w_2]$. We will apply this to the map $k\colon X_B\to N_B$.

Given a vector field $u$ on $B$, we can extend it to a vector field on $M$ which we also denote by $u$. We can then use the connection $h_\epsilon$ to lift it to the vector field $h_\epsilon(u)$ on $X$. The restriction of this vector field to $X_B$ is a vector field on $X_B$, and is independent of the chosen extension. The good Lagrangian condition states that this vector field on $X_B$ is $k$-related to a vector field on $N_B$, which by definition is $n(u)$.

The connection $h_\epsilon$ being flat is the condition that for any vector fields $u_1,u_2$ on $M$ we have  $h_\epsilon([u_1,u_2])=[h_\epsilon(u_1),h_\epsilon(u_2)]$.  But then it follows that $h_\epsilon([u_1,u_2])$ is $k$-related to $[n(u_1),n(u_2)]$, which by definition of $n$ implies that $n([u_1,u_2])=[n(u_1),n(u_2)]$ and hence that the connection $n$ is flat. 
\end{proof}

To express the good Lagrangian condition  more concretely,  take local Darboux co-ordinates $(z_1,\cdots,z_{2d})$ on $M$   and assume that $B\subset M$ is given by the equations $z_{d+1}=\cdots=z_{2d}=0$, and that $\omega_{pq}=\pm 1$ if $q-p=\pm d$ and is otherwise zero. 
Lifting the vector fields $\partial/\partial z_i$  with $1\leq i \leq d$ from $M$ to $X$ as in \eqref{above} gives
\begin{equation}
\label{vecf}v_i=\frac{\partial}{\partial \theta_i}, \qquad h_i=\frac{\partial}{\partial z_{i}} + \sum_{j=1}^d \Big(\frac{\partial^2 W}{\partial \theta_{i} \partial \theta_{j+d}} \cdot \frac{\partial}{\partial \theta_{j}}-\frac{\partial^2 W}{\partial \theta_{i} \partial \theta_j} \cdot \frac{\partial}{\partial \theta_{j+d}}\Big).\end{equation} 
Applying the projection $k\colon T_M|_B \to N_B$ amounts to setting $\partial/\partial \theta_i=0$ for $1\leq i\leq d$.  The condition of Definition \ref{good} is then that for $1\leq i\leq d$ the result of this projection should be independent of the co-ordinates $\theta_i$ for $1\leq i \leq d$. This is equivalent to
\begin{equation}\label{vanish}\frac{\partial^3 W}{\partial \theta_{i} \partial \theta_{j} \partial \theta_{k}}=0 \qquad 1\leq i,j,k\leq d\end{equation}
along the locus $z_{d+1}=\cdots=z_{2d}=0$.

There is a canonical real structure on the  tangent bundle $T_M$ whose fixed locus $T_M^{\bR}\subset T_M$ is the real span of the  integral affine structure $T_M^\bZ\subset T_M$. We call a complex Lagrangian $B\subset M$ non-degenerate if $T_B\cap T_M^{\bR}|_B =(0)\subset T_M|_B$.
When this holds, the restriction of the map $k$  to the lattice $T_M^{\bZ}|_B\subset T_M|_B$ is injective, and we denote its  image by  $N_B^{\bZ}\subset  N_B$. When the Lagrangian $B$ is both good and non-degenerate the connection  $n$ of Lemma \ref{goody} descends to a connection on the projection $\pi\colon N_B/N_B^{\bZ}\to B$  whose fibres are compact tori $\bC^d/\bZ^{2d}\isom (S^1)^{2d}$.
%


\subsection{Class $S[A_1]$ examples}

Consider the Joyce structure on the space $M=\cM(C,Q)$ constructed in this paper. Let us fix a curve $C\in \cM(C)$ and consider the Lagrangian submanifold $B=\cM_C(Q)\subset M$ which is the  corresponding fibre  of the projection $\rho\colon \cM(C,Q)\to \cM(C)$. Thus $B\subset H^0(C,\omega_C^{\tensor 2})$ parameterises quadratic differentials on $C$ with simple zeroes.

Note that if $Q'\in H^0(C,\omega_C^{\tensor 2})$ is a  quadratic differential on $C$, then $p^*(Q')$ vanishes to order two along the branch divisor $R\subset \Sigma$. It follows that the tangent space to $B$ at a point $(C,Q)$ can be identified with $H^0(\Sigma,\omega_\Sigma)^-$ via the  map
\begin{equation}T_b  B=H^0(C,\omega_C^{\tensor 2})\to H^0(\Sigma,\omega_{\Sigma})^-, \qquad  Q'\mapsto {p^*(Q')}/{2\lambda},\end{equation}
where $\lambda$ is the tautological 1-form on $\Sigma$ appearing in \eqref{delta}.
Under the isomorphism of Theorem \ref{existperiods} the sequence \eqref{normal2} then corresponds to the Hodge filtration
\begin{equation} \label{goodie}0\lra H^0(\Sigma,\omega_\Sigma)^-\lRa{i} H^1(\Sigma,\bC)^-\lRa{k} H^1(\Sigma,\O_{\Sigma})^-\lra 0.\end{equation}
It follows from the isomorphism \eqref{mapp} that the fibers  \begin{equation}H^1(\Sigma,\O_{\Sigma})^-/\tilde{H}^1(\Sigma,\bZ)= P^\sharp(\Sigma)/H^0(\Sigma,\omega_\Sigma)^-\end{equation}
of the map  $\pi\colon N_B/N_B^{\bZ}\to B$  are the  Prym varieties $P(\Sigma)$ appearing in Section \ref{prym}. 

\begin{lemma}
\label{donee}
For each curve $C$ the submanifold $B=\cM_C(Q)\subset M=\cM(C,Q)$ is a good Lagrangian. The horizontal leaves of the induced meromorphic flat connection on $\pi\colon N_B/N_B^{\bZ}\to B$ are defined by the condition that $E=p_*(L)$ is constant.
\end{lemma}

\begin{proof}
We use the notation $\cM_C(Q,L,\partial)$ to denote the space parameterising data $(Q,L,\partial)$ on the fixed curve $C$, and similarly for $\cM_C(Q,E,\nabla_\epsilon)$, etc. 
Let $w$ be a vector field on $B\subset M$ and let $u=h_\epsilon(w)$ be the lift to a vector field on $X_B\subset X$. Transferring across the diagram \eqref{seconddiag} we can consider $u$ to be a vector field on $\cM_C(Q,L,\partial)$, and we must show that it descends to the space $\cM_C(Q,L)$. That is, the flow of the line bundle $L$ under $u$ should be independent of the connection $\partial$. Passing through the diagram \eqref{biggy} we can view $u$ as a vector field on $\cM_C(Q,E,\nabla_\epsilon)$, and we must show that it descends to the space $\cM_C(Q,E)$.

By definition, the connection $h_\epsilon$ on the projection $\pi_2$ is  pulled back from the isomonodromy connection on $\pi_1$. Since $\rho_*(w)=0$, it follows that  $\rho'_*(u)=0$. That is, $u$ is obtained by keeping the pair $(E,\nabla_\epsilon)$ on $C$ fixed as $Q$ varies with $w$. It is then clear that  $u$ descends to $\cM_C(Q,E)$, and the result follows.\end{proof}

Consider the diagram of moduli spaces
\begin{equation*}
\label{sunny}
\begin{gathered}
\xymatrix@C=1em{  
 N_B/N_B^{\bZ}\ar[d]_{\pi}  && \cM_C(Q,L)\ar[ll]_{\tau}\ar[d] \ar[rr]^{\kappa}&& \cM_C(E,\Phi)\ar[d]  \\
 B \ar@{<->}[rr]^{=} &&\cM_C(Q) \ar@{<->}[rr]^{=}&& \cM_C(Q)
}
\end{gathered}\end{equation*}
Here $\kappa$ is the isomorphism defined by the usual spectral construction sending a line bundle $L$ on $\Sigma$ to the Higgs bundle $(E,\Phi)$ on $C$, and $\tau$ is induced by the corresponding map from \eqref{seconddiag}. The forgetful map  $\cM_C(E,\Phi)_0\to \cM_C(E)$ can be identified with the cotangent bundle of $\cM_C(E)$, and according to Lemma \ref{donee}, when transferred across the diagram \eqref{sunny},  the fibres of this map become the horizontal leaves of the connection $n$. 

 \begin{appendix}
\section{Definition of the moduli spaces}
\label{moduli}

In this section we give detailed constructions of  the moduli spaces appearing in the diagram \eqref{biggy}. All schemes are over $\Spec(\bC)$.  We fix a genus $g>1$ and a level $\ell>2$ throughout. We use the terminology bundle  for locally-free sheaf of finite rank, and line bundle for invertible sheaf.

\subsection{Curves with level structure}

A family of genus $g$ curves is a smooth proper map of schemes $f\colon \cC\to S$ of relative dimension 1 whose geometric fibres are connected and of genus $g$. Given a map of schemes $s\colon S'\to S$ we can pull back the family by forming the Cartesian diagram
\begin{equation}
\label{stack}
\xymatrix@C=1em{  \cC'\ar[d]_{f'} \ar[rr]^{t} &&\cC\ar[d]^{f} \\ S'\ar[rr]^{s} && S}\end{equation}

Given a family of genus $g$ curves $f\colon \cC\to S$ as above, there is a locally-constant sheaf of free $\bZ/\ell$-modules $\cV_f=\bR^1 f_*(\bZ/\ell)$ on $S$. This construction commutes with base-change: given a diagram \eqref{stack} there is a canonical isomorphism $\cV_{f'}\isom s^*(\cV_f)$. In particular, the  pullback of $\cV_f$ to a $\bC$-valued point of $S$ is the cohomology group $H^1(C,\bZ/\ell)\isom (\bZ/\ell)^{\oplus 2g}$ of the corresponding genus $g$ curve $C$.  The intersection form on these cohomology groups defines a  skew-symmetric $\bZ/\ell$-bilinear  form \begin{equation}\varpi_f\colon \cV_f\times\cV_f\to \bZ/\ell.\end{equation}

Let $V$ be the free $\bZ/\ell$-module on the symbols $a_1,b_1, \cdots,a_g,b_g$ equipped with the standard skew-symmetric form defined by \begin{equation}\varpi(a_i,a_j)=0=\varpi(b_i,b_j), \qquad \varpi(a_i,b_j)=\delta_{ij}, \qquad 1\leq i,j\leq g.\end{equation}
 A level $\ell$ structure on the family $f$ is then defined to be an isomorphism of $\bZ/\ell$-modules $\theta\colon V\to H^0(S,\cV_f)$ relating the forms $\varpi$ and $\varpi_f$. Given a diagram \eqref{stack}, a level $\ell$ structure on the family $f$ defines a pulled back level structure on the family $f'$ in the obvious way. 

Two families of curves $f_i\colon \cC_i\to S$ with level $\ell$ structure $\theta_i$ are isomorphic if there is an isomorphism $g\colon \cC_1\to \cC_2$ satisfying $f_2\circ g = f_1$ and preserving the level structures in the obvious way.  There is a functor ${\it M}(g,\ell)\colon ({\rm Sch}/\bC)^{\rm op}\to \Sets$ by sending a scheme $S$ to the set of isomorphism classes of families of genus $g$ curves over $S$ equipped with level $\ell$ structure. 

\begin{thm}
When $g>1$ and $\ell>2$  the functor $\it {M}(g,\ell)$ is represented by a smooth quasi-projective scheme $\cM(g,\ell)$.
\end{thm}

\begin{proof}
This appears to be standard, although it is hard to find a complete proof in the literature. Grothendieck \cite[\S 2]{Groth} shows that $\it{M}(g,\ell)$ is representable by an algebraic space for $\ell\gg 0$, and attributes to Serre the statement that $\ell>2$ is sufficient. Mumford {\it et al}  \cite[Theorem 7.9]{GIT} prove the analogous result on  moduli spaces of abelian varieties, which  implies that  $\it {M}(g,\ell)$ is representable by a quasi-projective scheme for $\ell\gg 0$, and again attribute to Serre the statement that  $\ell>2$ is sufficient.
\end{proof} 

The closed points of $\cM(g,\ell)$ parameterise smooth projective genus $g$ curves $C$  equipped with a choice of symplectic basis in the homology group $H_1(C,\bZ/\ell)$.

\subsection{Quadratic differentials}

Given a family of genus $g$ curves $f\colon \cC\to S$ we denote by $\omega_{\cC/S}$ the relative cotangent bundle. Given a Cartesian diagram \eqref{stack} there is a canonical isomorphism $\omega_{\cC'/S'}\isom g^*(\omega_{\cC/S})$.

If $C$ is any fibre of $f$ then Serre duality gives $H^1(C,\omega_C^{\tensor 2})=H^0(C,T_C)^*=0$. Using cohomology and base-change it follows that $f_*(\omega_{\cC/S}^{\tensor 2})$ is a vector bundle on $S$. By a quadratic differential on the family of curves $f$ we  mean a section of this vector bundle. Note that
\begin{equation}H^0(S, f_*(\omega_{\cC/S}^{\tensor 2}))=H^0(\cC,\omega_{\cC/S}^{\tensor 2}).\end{equation}
Applying this construction to the universal family of curves defines a vector bundle $\cE$  on the space $\cM(g,\ell)$. 
 
Define a functor ${\it Quad}(g,\ell)\colon \Sch/\bC^{op}\to \Sets$ by sending a scheme $S$ to the set of isomorphism classes of families of genus $g$  curves equipped with level structures and quadratic differentials. 

\begin{lemma}
The functor ${\it Quad}(g,\ell)$ is represented by smooth quasi-projective variety $\Quad(g,\ell)$ which is the total space of the vector bundle $\cE$ over $\cM(g,\ell)$.
\end{lemma}

The closed points of $\Quad(g,\ell)$ parameterise smooth projective genus $g$ curves $C$ equipped with a choice of symplectic basis in the homology group $H_1(C,\bZ/\ell)$ and a quadratic differential $Q\in H^0(C,\omega_C^{\tensor 2})$.

 Consider a family of genus $g$ curves $f\colon \cC\to S$ and a section $Q\in H^0(\cS,\omega_{\cC/S}^{\tensor 2})$.  The relative critical locus of $Q$ is a closed subscheme of $\cC$, and since $f$ is proper, its image  is a closed subscheme in $S$.  This construction commutes with base-change. Applying it to the universal family defines a closed subscheme of $\Quad(g,\ell)$. We define $\Quad_0(g,\ell)\subset \Quad(g,\ell)$ to be the complementary open subscheme. By definition, a closed point of  $\Quad(g,\ell)$ lies in this open subscheme precisely if the corresponding quadratic differential $Q\in H^0(C,\omega_C^{\tensor 2})$ has simple zeroes.
 
\subsection{Bundles, Higgs fields and flat connections}
\label{higgs}

Let $f\colon \cC\to S$ be a family of genus $g$ curves over a scheme $S$.  We now give the definitions of the relative moduli spaces of bundles, Higgs bundles and flat connections we will need. 

 A family of rank $r$ bundles on $f$ is simply a rank $r$ bundle $E$ on $\cC$. Two such families $E_i$ are equivalent if there is a line bundle $L$ on $S$ and an isomorphism $\theta\colon E_1\to E_2 \tensor f^*(L)$. A family  $E$  is said to have trivial determinant if the associated family of rank 1 bundles $\wedge^r(E)$ is equivalent to the trivial family $\O_\cC$. 

A family of rank $r$ Higgs bundles is a bundle $E$ on $\cC$ equipped with a relative Higgs field $\Phi\colon E\to E\tensor \omega_{\cC/S}$. Two such families $(E_i,\Phi_i)$ are equivalent if there is an isomorphism $\theta\colon E_1\to E_2 \tensor f^*(L)$ which intertwines $\Phi_1$ and $\Phi_2\tensor 1_{f^*(L)}$. A family $(E,\Phi)$  has trivial determinant if the associated family of rank 1 Higgs bundles  $(\wedge^r(E),\wedge^r(\Phi))$ is equivalent to the trivial family $(\O_{\cC},0)$. (This is the usual condition that the Higgs bundle has zero trace).

A family of rank $r$ flat connections on $f$ is a bundle $E$ on $\cC$ equipped with a relative connection $\nabla\colon E\to E\tensor \omega_{\cC/S}$. Two such families $(E_i,\nabla_i)$ are equivalent if there is an isomorphism $\theta\colon E_1\to E_2 \tensor f^*(L)$ which is flat for the induced relative connection on $\mathcal{H}om_{\O_{\cC}}(E_1,E_2\tensor f^*(L))$. A family $(E,\nabla)$ has trivial determinant if the corresponding family of rank 1 flat connections  $(\wedge^r(E),\wedge^r(\nabla))$ is equivalent to the trivial family $(\O_\cC,d)$.

In all cases we say that a family on $\cC$ as stable if the restriction of the bundle $E$ to each geometric fibre of $f$ is stable. Note that for Higgs bundles and flat connections this is strictly stronger than the usual notion of stability. We use the stronger notion to ensure that the forgetful maps appearing in the diagram \eqref{biggy} are well-defined. Since stability is an open condition it corresponds to passing to open subsets of the usual moduli spaces.

Fix again a family of genus $g$ curves $f\colon \cC\to S$ and assume $S$ to be of finite type over $\bC$. 
There is a functor \begin{equation}\it{Bun}(\cC/S,r)\colon (\rm{Sch}/S)^{\rm op}\to \Sets\end{equation} which sends a map $m\colon T\to S$ to the set of equivalence classes of stable families of rank $r$  bundles with trivial determinant on the pulled back family $f_T\colon\cC\times_S T\to T$.  We define moduli functors ${\it Higgs}(\cC/S,r)$ and $\it{Flat}(\cC/S,r)$ in the same way.

\begin{thm}
The  functors ${\it Bun}(\cC/S,r)$, $\it{Higgs}(\cC/S,r)$ and $\it{Flat}(\cC/S,r)$ are co-representable by schemes ${\Bun}(\cC/S,r)$, ${\Higgs}(\cC/S,r)$ and ${\Flat}(\cC/S,r)$ respectively. Each of these schemes is  smooth and quasi-projective over $S$. The obvious forgetful maps ${\Higgs}(\cC/S,r)\to {\Bun}(\cC/S,r)$ and ${\Flat}(\cC/S,r)\to {\Bun}(\cC/S,r)$ are smooth.
\end{thm}

\begin{proof}
The co-representability follows from the results of Simpson \cite{Simp1}. The other statements are easy and well-known.
\end{proof}

We apply these results to the universal family of curves over the moduli space $\cM(g,\ell)$. We denote the resulting moduli spaces as $\Bun(g,\ell,r)$, $\Higgs(g,\ell,r)$ and $\Flat(g,\ell,r)$. They are smooth quasi-projective schemes. Note for example that $\Bun(g,\ell,r)$ co-represents the functor which sends a scheme $S$ to the set of equivalence classes of pairs $(f,E)$ consisting of a family of genus $g$ curves $f\colon \cC\to S$ and a stable family of rank $r$ bundles with trivial determinant  $E$ over $f$. Similar remarks apply to $\Higgs(g,\ell,r)$ and $\Flat(g,\ell,r)$. 

Given a family of Higgs fields $\Phi\colon E\to E\tensor \omega_{\cC/S}$ on a family of genus $g$ curves $f\colon \cC\to S$ we can define a quadratic differential $Q\in H^0(\cC, \omega_{\cC/S}^{\tensor 2})$ by setting  $Q=\tfrac{1}{2}\tr(\Phi^2)$. This defines a map   \begin{equation}
\label{hugs}\Higgs(g,\ell,r)\to \Quad(g,\ell)\end{equation} and we define $\Higgs_0(g,\ell,r)$ to be the inverse image of the open subset $\Quad_0(g,\ell)$ of quadratic differentials with simple zeroes. 

\subsection{Anti-invariant branched connections}

Consider a family  of genus $g$ curves $f\colon \cC\to S$ equipped with a quadratic differential $Q\in H^0(S,\omega_{\cC/S}^{\tensor 2})$  with no relative critical points. We can form a double cover $p\colon \Sigma\to \cC$ by writing the equation $y^2=Q$ inside the total space of the bundle $\omega_{\cC/S}$. This construction commutes with base-change in the obvious way. There is a covering involution $\sigma\colon \Sigma\to \Sigma$  and a branch divisor $R\subset \Sigma$ which is flat over $S$. 
The composite $g=f\circ p\colon \Sigma\to S$  is a family of smooth genus $4g-3$ curves. 

A family of branched connections on $g\colon \Sigma\to S$ is a line bundle $L$ on $\Sigma$ equipped with a relative meromorphic connection $\partial \colon L\to L\tensor \omega_{\Sigma/S} (R)$. Two such families $(L_i,\partial_i)$ are equivalent if there is a line bundle $N$ on $S$ and an isomorphism $\theta\colon L_1\to L_2 \tensor g^*(N)$ which is flat for the induced relative meromorphic connection on $\mathcal{H}om_{\O_{\Sigma}}(L_1,L_2\tensor g^*(N))$.
A family of branched connections $(L,\partial)$ is anti-invariant if the branched connection $(L,\partial)\tensor \sigma^*(L,\partial)$ is equivalent to the family of branched connections $(p^*(\omega_C),\partial_{can})$. 

There is a functor \begin{equation}\it{Flat^{\br}}(\Sigma/S)\colon (\rm{Sch}/S)^{\rm op}\to \Sets\end{equation} which sends a map $m\colon T\to S$ to the set of equivalence classes of anti-invariant families of branched connections on  the pulled back family $g_T\colon\Sigma\times_S T\to T$.  

\begin{thm}
The  functor $\it{Flat^{\br}}(\Sigma/S)$ is  representable by a scheme  ${\Flat^{\br}}(\Sigma/S)$ which is smooth and quasi-projective over $S$.
\end{thm}

\begin{proof}
For moduli spaces of flat connections with logarithmic singularities we can refer to Nitsure \cite{Nitsure}, but since we are dealing with rank 1 connections this is really over-kill. For a more elementary approach we can pass to an {\'e}tale cover of the functor $\it{Flat^{\br}(\Sigma/S)}$ by adding the data of a square-root of the line bundle $\omega_{\cC/S}$. Then, as in the proof of Lemma \ref{lemmaabove}, we can replace the branched connections $(L,\partial_L)$ with regular connections $(M,\partial_M)$. 
\end{proof}

We can apply the above construction to the universal family  over  $S=\Quad_0(g,\ell)$. We denote the resulting moduli space by $\Flat^{\br}(g,\ell)$. It is a smooth quasi-projective variety. It represents the functor which sends a scheme $S$ to the set of equivalence classes of quadruples $(f,Q,L,\partial)$ consisting of a family of genus $g$ curves $f\colon \cC\to S$ equipped with a quadratic differential $Q\in H^0(\cC,\omega_{\cC/S}^{\tensor 2})$ with simple zeroes, and a family of anti-invariant branched connections $(L,\partial)$ on the associated family of spectral curves $g\colon \Sigma\to S$.

\subsection{Moduli spaces and maps}

We can now define the moduli spaces and maps appearing in the diagram \eqref{biggy}. Firstly we set $\cM(C)=\cM(g,\ell)$ and $\cM(C,Q)=\Quad_0(g,\ell)$. Then we
take rank $r=2$ and set $\cM(C,E)=\Bun(g,\ell,2)$ and \begin{equation}\cM(C,E,\nabla)=\Flat(g,\ell,2), \qquad \cM(C,E,\Phi)=\Higgs_0(g,\ell,2).\end{equation}
We define $\cM(C,E,\nabla,\Phi)$ to be the fibre product of the obvious forgetful maps
\begin{equation*}
\label{stack2}
\xymatrix@C=1em{  \cM(C,E,\nabla,\Phi)\ar[d] \ar[rr]&&\cM(C,E,\Phi)\ar[d] \\ \cM(C,E,\nabla) \ar[rr] &&\cM(C,E). }\end{equation*}
Similarly we define $\cM(C,Q,E,\nabla)$ as the fibre product
\begin{equation*}
\label{stack3}
\xymatrix@C=1em{  \cM(C,Q,E,\nabla)\ar[d] \ar[rr] &&\cM(C,Q)\ar[d] \\ \cM(C,E,\nabla) \ar[rr] &&\cM(C) }\end{equation*}
Finally we take $\cM(C,Q,L,\partial)$ to be the open subvariety of $\Flat^{\br}(g,\ell)$ defined by the condition that $E=p_*(L)$ is stable.\todo{Improve this} Each of these spaces are smooth quasi-projective varieties.

The maps in the diagram \eqref{biggy} are just the obvious forgetful maps, with the exception of $\alpha$ and $\beta_\epsilon$. 
To define $\alpha$ we follow the same procedure in the text in the relative setting.
To define $\beta_\epsilon$ note  that $\beta_\epsilon=\beta_\infty\circ r_\epsilon$ where $r_\epsilon$ is the automorphism of the space $\cM(C,E,\nabla,\Phi)$ defined by $\nabla\mapsto \nabla+\epsilon^{-1}\Phi$. The map $\beta_\infty$ is given by the rule  $(C,E,\nabla,\Phi)\mapsto (C,Q,E,\nabla)$ and is induced using the above fibre-product diagrams from  the map \eqref{hugs}.

\end{appendix}



\begin{thebibliography}{101}









\bibitem{All0} D. Allegretti, Voros symbols as cluster co-ordinates,  J. Topol. 12 (2019), no. 4, 1031--1068.


\bibitem{All2} D. Allegretti, Stability conditions, cluster varieties, and Riemann-Hilbert problems from surfaces,  Adv. Math. 380 (2021), paper no. 107610, 62 pp


\bibitem{A} D. Arinkin, On $\lambda$-connections on a curve where $\lambda$ is a formal parameter, Math. Res. Lett. 12 (2005), no. 4, 551--565.






\bibitem{BNR} A. Beauville, M.S. Narasimhan and S. Ramanan, Spectral curves and the generalised theta divisor, J. Reine Angew. Math. 398 (1989), 169--179. 









\bibitem{spaces} T. Bridgeland, Spaces of stability conditions, Algebraic geometry (Seattle, 2005). Part 1, 1--21, Proc. Sympos. Pure Math., 80, Amer. Math. Soc.  (2009). 
\bibitem{RHDT1} T. Bridgeland, Riemann-Hilbert problems  from Donaldson-Thomas invariants, Invent. Math. 216 (2019), no. 1, 69--124.


\bibitem{RHDT2} T. Bridgeland, Geometry from Donaldson-Thomas invariants, Proc. Sympos. Pure Math., 103.2
AMS, 2021, 1--66.

\bibitem{A2} T. Bridgeland and D. Masoero, On the monodromy of the deformed cubic oscillator, Math. Ann. 385 (2023), no. 1-2, 193--258.

\bibitem{BS} T. Bridgeland and I. Smith, Quadratic differentials as stability conditions, Publ. Math. Inst. Hautes {\'E}tudes Sci. 121 (2015), 155--278. 

\bibitem{Strachan} T. Bridgeland and I.A.B. Strachan, Complex hyperk{\"a}hler structures defined by Donaldson-Thomas invariants, Lett. Math. Phys. 111 (2021), no. 2, paper no. 54, 24 pp.








\bibitem{Del} P. Deligne, {\'E}quations diff{\'e}rentielles {\`a} points singuliers r{\'e}guliers, 
Lecture Notes in Mathematics, Vol. 163. Springer-Verlag,  1970. iii+133 pp.



\bibitem{DP} R. Donagi and T. Pantev, Geometric Langlands  and non-abelian Hodge theory, Surveys in differential geometry. Vol. XIII. Geometry, analysis, and algebraic geometry: forty years of the Journal of Differential Geometry, 85--116,
Surv. Differ. Geom., 13, 2009. 




\bibitem{DM} M. Dunajski and L. Mason, Hyperk{\"a}hler hierarchies and their twistor theory, Commun. Math. Phys. 213 (2000) 641--672


\bibitem{FG} V. V. Fock and A. B. Goncharov,  Moduli spaces of local systems and higher Teichm\"uller theory, Publ. Math. Inst. Hautes {\'E}tudes Sci., {103}(1) (2006), 1--211.

\bibitem{FT} V.V. Fock and A. Thomas, Higher complex structures, Int. Math. Res. Not. (2021), no. 20, 15873--15893.



\bibitem{G} D. Gaiotto, Opers and TBA, arxiv.

\bibitem{GMN1} D. Gaiotto, G. Moore and A. Neitzke, Four-dimensional wall-crossing via three-dimensional field theory. Comm. Math. Phys. 299 (2010), no. 1, 163--224.

\bibitem{GMN2} D. Gaiotto, G. Moore and A. Neitzke, Wall-crossing, Hitchin systems, and the WKB approximation. Adv. Math. 234 (2013), 239--403.

\bibitem{Gold} W. M. Goldman, The symplectic nature of fundamental groups of surfaces, Adv. in Math. 54 (1984), no. 2, 200--225.




\bibitem{Gotay} M. J. Gotay, R. Lashof, J. {\'S}niatycki, and A. Weinstein, Closed forms on symplectic fibre bundles, Comment. Math. Helv. 58 (1983), no. 4, 617--621.

 \bibitem{Groth} A. Grothendieck, Techniques de construction en g{\'e}om{\'e}trie analytique. I. Description axiomatique de l'espace de Teichm{\"u}ller et de ses variantes. S{\'e}minaire Henri Cartan, Volume 13 (1960-1961) no. 1, Talk no. 7 et 8, 33 p.
 
\bibitem{Haiden} F. Haiden, 3d Calabi--Yau categories for Teichm{\"u}ller theory, arxiv.


\bibitem{Hitchin} N. Hitchin, The self-duality equations on a Riemann surface, Proc. London Math. Soc. (3) 55 (1987), no. 1, 59--126.

\bibitem{Hitchin2} N. Hitchin, Stable bundles and integrable systems.
Duke Math. J.54(1987), no.1, 91–114.

\bibitem{HN} L. Hollands and A. Neitzke, L. Hollands and A. Neitzke, Spectral Networks and Fenchel-Nielsen Coordinates, Lett.
Math. Phys. 106 (2016) 811--877.


%









\bibitem{HolGen} D. Joyce, Holomorphic generating functions for invariants counting coherent sheaves on Calabi-Yau 3-folds, Geom. Topol. 11 (2007), 667--725. 

\bibitem{KS} M. Kontsevich and Y. Soibelman, Affine structures and non-archimedean analytic spaces, Progr. Math., 244
(2006), 321--385.




\bibitem{DL} G. Laumon, Un analogue global du c{\^o}ne nilpotent, Duke Math. J. 57 (1988) 667--671.




 
 \bibitem{mum} D. Mumford, Prym varieties I, Contributions to analysis (a collection of papers dedicated to Lipman Bers), pp. 325--350. Academic Press, 1974. 



\bibitem{GIT} D. Mumford, J. Fogarty and F. Kirwan, 
Geometric invariant theory, Third edition, 
Ergeb. Math. Grenzgeb. (2), 34,
Springer-Verlag, Berlin, 1994. xiv+292 pp.


\bibitem{Nik1} N. Nikolaev, Abelianisation of logarithmic $\mathfrak{sl}_2$--connections, Selecta Math. (N.S.) 27 (2021), no. 5, no. 78, 35 pp.


\bibitem{Nitsure} N. Nitsure, Moduli of Semistable Logarithmic Connections, J. of the Amer. Math. Soc., Vol. 6, No. 3, (1993), pp. 597--609.





\bibitem{P} J. F. Pleba{\'n}ski, Some solutions of complex Einstein equations, J. Math. Phys. 16 (1975)
2395--2402.


\bibitem{Simp1} C.  Simpson,  Moduli of representations of the fundamental group of a smooth projective variety I,
Inst. Hautes {\'E}tudes Sci. Publ. Math. (1994), no. 79, 47--129.

\bibitem{Simp2} C. Simpson,  Moduli of representations of the fundamental group of a smooth projective variety II, 
Inst. Hautes {\'E}tudes Sci. Publ. Math. (1994), no. 80, 5--79.



\bibitem{Veech} W. Veech, Moduli spaces of quadratic differentials, J. d'Analyse Math.  55, pp. 117--171 (1990).




\bibitem{Z}  M. Zikidis, Joyce structures on spaces of meromorphic quadratic differentials, in preparation.

 
\end{thebibliography}
\end{document}


\section{Half-flat Lagrangians}
\label{slices}

 
In this section we define the notion of a half-flat Lagrangian submanifold $B\subset M$ in the base of a Joyce structure. Put briefly, the condition is that the restriction of each non-linear connection $h_\epsilon$   on $\pi\colon X=T_M\to M$ to the bundle $\pi\colon T_M|_B\to B$ descends to a connection on the normal bundle $\pi\colon N_B\to B$. We then prove that for the Joyce structures constructed in this paper, the submanifolds in $\cM_C(Q)\subset \cM(C,Q)$ obtained by fixing the curve $C$ indeed have this property. 

\subsection{Half-flat Lagrangians}

Consider a pre-Joyce structure  on a complex manifold $M$. It consists of a holomorphic symplectic form $\omega$ and a pencil of non-linear connections $h_\epsilon=h+\epsilon^{-1}v$ on the bundle $\pi\colon X=T_M\to M$. Given a complex  submanifold $B\subset M$, the normal bundle $\pi\colon N_B\to B$ fits into the sequence
\begin{equation}
\label{normal2}0\lra T_B\lRa{i} T_M|_B\lRa{k} N_B\lra 0.\end{equation}
Each  connection $h_\epsilon$  restricts to a non-linear connection  $h_\epsilon|_B$ on the bundle $X_B=T_M|_B\to B$. 
  
 \begin{definition}
 \label{good}
A half-flat Lagrangian submanifold of $M$ is a complex submanifold $B\subset M$ such that
\begin{itemize}
\item[(i)] $B\subset M$ is Lagrangian for the holomorphic symplectic form $\omega$,
\item[(ii)]  the restricted connection  $h_\epsilon|_B$  descends via the map $k\colon X_B\to N_B$ to a  non-linear connection $n$ on the normal bundle $\pi\colon N_B\to B$.
\end{itemize}
\end{definition}

To explain condition (ii) in more detail, take  $x\in X_B$ with $\pi(x)=b\in B$.  The bundle map $k$ defines  a map of complex manifolds $k\colon X_B\to N_B$, and we set $y=k(x)\in N_B$.

\begin{equation*}
\label{snooze}
\begin{gathered}
\xymatrix@C=3em{  
 X\ar[d]_{\pi} && X_B\ar[d]\ar@{_{(}->}[ll]\ar[rr]^{k}&& N_B\ar[d]^{\pi}   \\
M  && B\ar@{_{(}->}[ll] \ar@{<->}[rr]^{=}  && B
}
\end{gathered}\end{equation*}

Given a vector $w\in T_b B\subset T_b M$ the connection $h_\epsilon$ defines a lift $h_\epsilon (w)\in  T_x X_B\subset T_x X$, and we define $n(w)=k_*(h_\epsilon(w))\in T_y N_B$. Note that $n(w)$   is independent of $\epsilon$, since $k_*(v(w))=0$. The condition of Definition \ref{good} is that $n(w)$ depends only on $y\in N_B$, not on the element $x\in X_B$ satisfying $k(x)=y$. When this condition holds the map $n$ defines a connection on $\pi\colon N_B\to B$. 

\subsection{In co-ordinates}

To express the half-flat condition  more concretely,  take local Darboux co-ordinates $(z_1,\cdots,z_{2d})$ on $M$  as in Section \ref{prejoyce}, and assume that $B\subset M$ is given by the equations $z_{d+1}=\cdots=z_{2d}=0$, and that $\omega_{pq}=\pm 1$ if $q-p=\pm d$ and is otherwise zero. 
Lifting the vector fields $\partial/\partial z_i$  with $1\leq i \leq d$ from $M$ to $X$ as in \eqref{above} gives
\begin{equation*}
\label{vecf}v_i=\frac{\partial}{\partial \theta_i}, \qquad h_i=\frac{\partial}{\partial z_{i}} + \sum_{j=1}^d \Big(\frac{\partial^2 W}{\partial \theta_{i} \partial \theta_{j+d}} \cdot \frac{\partial}{\partial \theta_{j}}-\frac{\partial^2 W}{\partial \theta_{i} \partial \theta_j} \cdot \frac{\partial}{\partial \theta_{j+d}}\Big).\end{equation*} 
Applying the projection $k\colon T_M|_B \to N_B$ amounts to setting $\partial/\partial \theta_i=0$ for $1\leq i\leq d$.  The condition of Definition \ref{good} is then that for $1\leq i\leq d$ the result of this projection should be independent of the co-ordinates $\theta_i$ for $1\leq i \leq d$. This is equivalent to
\begin{equation}\label{vanish}\frac{\partial^3 W}{\partial \theta_{i} \partial \theta_{j} \partial \theta_{k}}=0 \qquad 1\leq i,j,k\leq d\end{equation}
along the locus $z_{d+1}=\cdots=z_{2d}=0$.

In fact the equation \eqref{vanish} relies on us having chosen an appropriate system of Darboux co-ordinates $z_i$ on $M$. If we change these co-ordinates, the function $W$ picks up extra cubic terms in the $\theta_i$ variables (see \cite[Section 4.2]{A2} for explicit formulae).  A more invariant statement is  therefore that if $w_1,\cdots, w_4$ are vector fields on $B$, and we consider the associated vertical vector fields $v_i=v(w_i)$ on $X_B\to B$, then the fourth partial derivatives $v_1 v_2 v_3 v_4 (W|_{X_B})$ all vanish. 

\subsection{Flatness and periodicity}

Let $(\omega,h)$ be a pre-Joyce structure on a complex manifold $M$.

\begin{lemma}
If $B\subset M$ is a half-flat Lagrangian then the induced connection $n$ on  the normal bundle $\pi\colon N_B\to B$ is flat.\end{lemma}

\begin{proof}
Recall that if $f\colon M\to N$ is a map of complex manifolds, and $v,w$ are vector fields on $M,N$ respectively, then $v,w$ are said to be $f$-related if $f_*(v_m)=w_{f(m)}$ for all $m\in M$. Given vector fields $v_1,v_2$ on $M$ which are $f$-related to vector fields $w_1,w_2$ on $N$ it is easily checked that $[v_1,v_2]$ is $f$-related to $[w_1,w_2]$. We will apply this to the map $k\colon X_B\to N_B$.

Given a vector field $v$ on $B$, we can extend it to a vector field on $M$ which we also denote by $v$. We can then use the connection $h_\epsilon$ to lift it to the vector field $h_\epsilon(v)$ on $X$. The restriction of this vector field to $X_B$ is a vector field on $X_B$, and is independent of the chosen extension. The good Lagrangian condition states that this vector field on $X_B$ is $k$-related to a vector field on $N_B$, which by definition is $n(v)$.

The connection $h_\epsilon$ being flat is the condition that for any vector fields $v_1,v_2$ on $M$ we have  $h_\epsilon([v_1,v_2])=[h_\epsilon(v_1),h_\epsilon(v_2)]$.  But then it follows that $h_\epsilon([v_1,v_2])$ is $k$-related to $[n(v_1),n(v_2)]$, which by definition of $n$ implies that $n([v_1,v_2])=[n(v_1),n(v_2)]$ and hence that the connection $n$ is flat. 
\end{proof}

Suppose now that there is a period structure $(T_M^{\bZ},Z)$ on $M$.
There is a canonical real structure on the  tangent bundle $T_M$ whose fixed locus $T_M^{\bR}\subset T_M$ is the real span of the  lattice $T_M^\bZ\subset T_M$. We call a Lagrangian submanifold $B\subset M$ non-degenerate if $T_B\cap T_M^{\bR}|_B =(0)\subset T_M|_B$.
When this holds, the restriction of the map $k$  to the lattice $T_M^{\bZ}|_B\subset T_M|_B$ is injective, and we denote its  image by  $N_B^{\bZ}\subset  N_B$. The fibres of the projection $\pi\colon N_B/N_B^{\bZ}\to B$  are then compact tori $\bC^d/\bZ^{2d}\isom (S^1)^{2d}$. It is immediate that if the connection $h$ on $\pi\colon X=T_M\to M$ descends to the quotient by $T_M^{\bZ}$, as in the definition of a Joyce structure, then the induced flat connection $n$ descends to a flat non-linear connection on the torus fibration $\pi\colon N_B/N_B^{\bZ}\to B$.

\begin{remark} The real structure on $T_M$ and the symplectic form $\omega$ together define a non-degenerate Hermitian form on $T_M$, given in flat Darboux  co-ordinates by
\begin{equation*}\zeta=\sum_{p,q} i \omega_{pq} \cdot dz_p\wedge d\bar{z}_q.\end{equation*}
As explained in \cite[Section 1.3]{ACDC}, the complex Lagrangian $B\subset M$ is non-degenerate in the above sense precisely if the restriction of $\zeta$ to $B$ is non-degenerate. In this case, $B$ becomes a (possibly indefinite)  special K{\"a}hler manifold.  Via the isomorphism $N_B\isom T_B^*$ induced by $\omega$, the torus fibration $\pi\colon N_B/N_B^{\bZ}\to B$  becomes the algebraic integrable system considered in \cite[Section 3]{Freed}.
\end{remark}

\comment{Since a half-flat Lagrangian submanifold $B\subset M$ is, in particular, a special K{\"a}hler manifold, there is a (usual, real) \hk structure on the bundle $T^*_B\isom N_B$, whose construction is explained in \cite[Section 2]{Freed}. Applying the same construction, but using the non-linear symplectic connection $n$ in place of the linear connection $\nabla$ associated to the special K{\"a}hler manifold presumably gives another real \hk structure on $T^*_B$. It might be interesting to see what this is in examples.}

\subsection{Half-flat Lagrangians in $\cM(C,Q)$}

Consider the setting of this paper in which the base of the Joyce structure is $M=\cM(C,Q)$. Let us fix a curve $C\in \cM(C)$ and consider the Lagrangian submanifold $B=\cM_C(Q)\subset M$ which is the  corresponding fibre  of the projection $\rho\colon \cM(C,Q)\to \cM(C)$. Thus $B\subset H^0(C,\omega_C^{\tensor 2})$ parameterises quadratic differentials on $C$ with simple zeroes.

Note that if $Q'\in H^0(C,\omega_C^{\tensor 2})$ is a  quadratic differential on $C$, then $p^*(Q')$ vanishes to order two along the branch divisor $R\subset \Sigma$. It follows that the tangent space to $B$ at a point $(C,Q)$ can be identified with $H^0(\Sigma,\omega_\Sigma)^-$ via the  map
\begin{equation*}T_b  B=H^0(C,\omega_C^{\tensor 2})\to H^0(\Sigma,\omega_{\Sigma})^-, \qquad  Q'\mapsto {p^*(Q')}/{2\lambda},\end{equation*}
where $\lambda$ is the tautological 1-form on $\Sigma$ appearing in \eqref{delta}.
Under the isomorphism of Theorem \ref{existperiods} the sequence \eqref{normal2} then corresponds to the Hodge filtration
\begin{equation*} \label{goodie}0\lra H^0(\Sigma,\omega_\Sigma)^-\lRa{i} H^1(\Sigma,\bC)^-\lRa{k} H^1(\Sigma,\O_{\Sigma})^-\lra 0.\end{equation*}
It follows from the isomorphism \eqref{mapp} that the fibers  \begin{equation*}H^1(\Sigma,\O_{\Sigma})^-/\tilde{H}^1(\Sigma,\bZ)= P^\hash(\Sigma)/H^0(\Sigma,\omega_\Sigma)^-\end{equation*}
of the map  $\pi\colon N_B/N_B^{\bZ}\to B$  are the  Prym varieties $P(\Sigma)$ appearing in Section \ref{prym}. Thus the map $\pi$ is both Prym and proper.

\begin{lemma}
For each curve $C$ the submanifold $B=\cM_C(Q)\subset M=\cM(C,Q)$ is a half-flat Lagrangian. The horizontal leaves of the induced meromorphic flat connection on $\pi\colon N_B/N_B^{\bZ}\to B$ are defined by the condition that $E=p_*(L)$ is constant.
\end{lemma}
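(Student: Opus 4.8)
The plan is to verify the good Lagrangian condition directly from Definition \ref{good}, using the explicit description of the connections $h_\epsilon$ obtained by transferring the isomonodromy connection across the diagrams \eqref{biggy} and \eqref{seconddiag}. Fix a curve $C$ and set $B=\cM_C(Q)\subset M$. Since $\tau$ identifies the multi-section of $\pi_3$ over $B$ with the normal bundle data, the point is to understand the restriction of the isomonodromy connection to the locus of $\cM(C,E,\nabla,\Phi)$ lying over a fixed $C$. Along this locus the curve $C$ is frozen, so no isomonodromic deformation of the complex structure of $C$ is taking place; all that varies is the quadratic differential $Q=-\det(\Phi)$ together with $(E,\nabla)$ and $\Phi$. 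The key observation is that, over a fixed $C$, the spectral correspondence $\alpha$ sends $(E,\nabla,\Phi)$ to $(C,Q,L,\partial)$ in such a way that the underlying bundle $E=p_*(L)$ is recovered from $L$ alone; I would show that the isomonodromy connection over fixed $C$, transferred via $\alpha$ and then via $\tau$ to $\cH^\hash\to M$ restricted over $B$, has horizontal leaves exactly those along which $E=p_*(L)$ stays constant. This will simultaneously identify the leaves and prove that the resulting parallel transport on $T_M|_B\to B$ descends to $N_B$.

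In more concrete terms, I would argue as follows. First, reduce to showing the cubic vanishing \eqref{vanish}, or rather its invariant form: for vertical vector fields $v_i=v(w_i)$ with $w_i$ tangent to $B$, the derivatives $v_1v_2v_3(W|_{X_B})$ are independent of the fibre coordinate, equivalently $v_1v_2v_3v_4(W|_{X_B})=0$. Tracing through the construction of $W$, the second heavenly function encodes the deviation of the isomonodromy connection from the "trivial" connection $h_\infty$ in the $\theta$-directions; the relevant third derivatives in the $B$-directions measure how the monodromy of $(E,\nabla_\epsilon)$ varies as one moves the Higgs field $\Phi$ within the fibre $B$ while keeping $(E,\nabla)$ and $C$ fixed. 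Using the abelianized picture (the map $r_\epsilon$ of Theorem \ref{cool} and the monodromy of $(\O_\Sigma,d+\epsilon^{-1}\lambda)$), moving within $B$ only changes $\lambda\in H^0(\Sigma,\omega_\Sigma)^-$, hence only changes $\partial$ by an abelian (commutative) amount; the nonlinearity of the isomonodromy connection therefore does not see the $B$-directions beyond quadratic order. This is precisely the statement that $B$ is a good Lagrangian.

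To pin down the leaves, I would use the Hodge-theoretic sequence \eqref{goodie} and the isomorphism \eqref{mapp}: the induced flat connection $n$ on $\pi\colon N_B/N_B^\bZ\to B$ is a connection on a family of Prym varieties $P(\Sigma)$, and I claim its horizontal leaves are the constant-$E$ loci. The cleanest way is to note that on the fibre of $\pi_3$ over $(C,Q)$ the correspondence $\alpha$ realizes $\cM(C,Q,L,\partial)$ as (a torsor over) $P^\sharp_{br}(\Sigma)$, and the fibres of the map $(L,\partial)\mapsto p_*(L)=E$ are the orbits under tensoring by flat line bundles $(M,\partial_M)\in P^\sharp(\Sigma)^-$ with $M\otimes\sigma^*M\isom\O_\Sigma$; these are exactly the "Hitchin-section-translate" directions, and the isomonodromy connection restricted over fixed $C$ is the one whose flat sections keep the $E$-component of the diagram \eqref{biggy} constant, because the isomonodromy flow over a frozen base curve $C$ and frozen $(E,\nabla)$ is, by construction of $\beta_\epsilon$ and $\alpha$, trivial on the $\cM(C)$- and $\cM(C,E,\nabla_\epsilon)$-factors. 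Combining this with the previous paragraph gives both assertions of the lemma.

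The main obstacle I anticipate is making the reduction in the second paragraph rigorous, i.e. genuinely controlling the restriction of the (nonlinear) isomonodromy connection to a fibre of $\rho$ and showing that the $B$-directions contribute no cubic term to $W$. This requires a careful analysis of how the abelianization map $\alpha$ interacts with the isomonodromy connection — exactly the kind of detail the paper defers to \cite{BZ} — and in particular understanding the behaviour near the exceptional locus of $\alpha$ (the oper locus of Remark \ref{remoper}), since the Joyce structure is only meromorphic. A safe version of the argument would establish the identity on the dense open locus where $\alpha$ is an isomorphism and $E$ is stable, and then invoke the meromorphic extension of $h_\epsilon$ from Theorem \ref{cool} to conclude that \eqref{vanish} holds identically along $B$; the identification of the leaves then follows by continuity from the stable locus.
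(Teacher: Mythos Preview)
Your core observation is the right one, but you bury it and then take an unnecessary detour. The key point --- which you state in your first paragraph and again at the end of your third --- is that the connection on $\pi_2$ in diagram \eqref{biggy} is \emph{pulled back} from the isomonodromy connection on $\pi_1$ via the Cartesian square. Since a vector $w$ tangent to $B=\cM_C(Q)$ satisfies $\rho_*(w)=0$, its $h_\epsilon$-lift $u$ automatically satisfies $\rho'_*(u)=0$; that is, the pair $(E,\nabla_\epsilon)$ is held fixed along the flow. In particular $E=p_*(L)$ is constant, and since the projection $\cM_C(Q,L,\partial)\to\cM_C(Q,L)$ forgets only $\partial$, the flow of $L$ depends only on the starting $L$ and the base motion in $Q$, not on $\partial$. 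This single observation proves both assertions of the lemma simultaneously, and is exactly the paper's argument.

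Your second paragraph, by contrast, proposes to verify the cubic vanishing condition \eqref{vanish} via a heuristic: that because motion in the $T_B$-directions only shifts $\partial$ by elements of $H^0(\Sigma,\omega_\Sigma)^-$, which act abelianly, ``the nonlinearity of the isomonodromy connection does not see the $B$-directions beyond quadratic order.'' This is not a proof. The function $W$ encodes the isomonodromy connection only after transfer through the birational map $\alpha$ and the Riemann--Hilbert map $\tau$, and the commutativity of the torsor action on $J^\sharp_{br}(\Sigma)$ says nothing directly about the Taylor expansion of $W$ in the $\theta$-variables. The obstacles you flag in your last paragraph --- controlling $\alpha$ near its exceptional locus, behaviour at the oper points --- are almost entirely artifacts of this detour; the Cartesian-square argument above works on the dense open locus where $\alpha$ and $\beta_\epsilon$ are isomorphisms, which is all that is needed for a meromorphic statement.
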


\begin{proof}
We use the notation $\cM_C(Q,L,\partial)$ to denote the space parameterising data $(Q,L,\partial)$ on the fixed curve $C$, and similarly for $\cM_C(Q,E,\nabla_\epsilon)$, etc. 
Let $w$ be a vector field on $B\subset M$ and let $u=h_\epsilon(w)$ be the lift to a vector field on $X_B\subset X$. Transferring across the diagram \eqref{seconddiag} we can consider $u$ to be a vector field on $\cM_C(Q,L,\partial)$, and we must show that it descends to the space $\cM_C(Q,L)$. That is, the flow of the line bundle $L$ under $u$ should be independent of the connection $\partial$. Passing through the diagram \eqref{biggy} we can view $u$ as a vector field on $\cM_C(Q,E,\nabla_\epsilon)$, and we must show that it descends to the space $\cM_C(Q,E)$. Now by definition, the connection $h_\epsilon$ on the projection $\pi_2$ is  pulled back from the isomonodromy connection on $\pi_1$. Since $\rho_*(w)=0$, it follows that  $\rho'_*(u)=0$. That is, $u$ is obtained by keeping the pair $(E,\nabla_\epsilon)$ on $C$ fixed as $Q$ varies with $w$. It is then clear that  $u$ descends to $\cM_C(Q,E)$, and the result follows.\end{proof}

We can give a direct construction of the connection of the Lemma using the diagram of moduli spaces
\begin{equation*}
\label{sunny}
\begin{gathered}
\xymatrix@C=2em{  
 N_B/N_B^{\bZ}\ar[d]_{\pi}  && \cM_C(Q,L)\ar[ll]_{\tau}\ar[d] \ar[rr]^{\kappa}&& \cM_C(E,\Phi)\ar[d]\ar[rr]^{\xi}&& \cM_C(E)\times\cM_C(Q)\ar[d]   \\
 B \ar@{<->}[rr]^{=} &&\cM_C(Q) \ar@{<->}[rr]^{=}&& \cM_C(Q)\ar@{<->}[rr]^{=}  && \cM_C(Q)
}
\end{gathered}\end{equation*}
Here $\kappa$ is the isomorphism defined by the usual spectral construction\todo{Explain $\tau$} sending a line bundle $L$ on $\Sigma$ to the Higgs bundle $(E,\Phi)$ on $C$. The map $\xi$ is obtained by setting $Q=\tfrac{1}{2}\tr(\Phi^2)$. As in the proof of Theorem \ref{birational} the results of \cite{PP} show that $\xi$ is generically {\'e}tale. Thus we can pullback the trivial connection from the product on the right-hand side to obtain the required meromorphic connection on the left-hand side.

\comment{Prepotential : canonical 1-form $i_E\omega$ on $M$ which is closed on Lagrangian. Hence $d\cF$. Compare with usual definition of prepotential.}



\section{Moduli stacks and detailed proofs}
\label{moduli}

In this section we give detailed descriptions of  the moduli spaces appearing in the diagram \ref{biggy} and give a proof of Theorem  \ref{symplectic}. We take as our site the category $\Sch/\bC$  
 of schemes over $\Spec(\bC)$ equipped with the fppf topology, and view stacks in terms of categories fibered in groupoids. We fix a genus $g\geq 2$ and a level $r\geq 1$ throughout. We use the terminology bundle  for locally-free sheaf of finite rank, and line bundle for invertible sheaf.

\subsection{Stacks of curves}

A family of genus $g$ curves is a smooth proper morphism of schemes $f\colon \cC\to S$ of relative dimension 1 whose geometric fibres are connected and of genus $g$. There is a CFG $\cM_g$ whose objects are families of genus $g$ curves $f\colon \cC\to S$, and whose morphisms are Cartesian diagrams
\begin{equation*}
\label{stack}
\xymatrix@C=1.7em{  \cC'\ar[d]_{f'} \ar[rr]^{t} &&\cC\ar[d]^{f} \\ S'\ar[rr]^{s} && S}\end{equation*}
The fibre functor sends such a diagram to the morphism $s\colon S'\to S$.
This CFG is the moduli stack  of genus $g$ curves.

Write $\bZ_r=\bZ/r\bZ$. Given a family of genus $g$ curves $f\colon \cC\to S$, there is a locally-constant sheaf of free $\bZ_r$-modules $\cV_f=\bR^1 f_*(\bZ_r)$ on $S$. This construction commutes with base-change: given a diagram \eqref{stack} there is an isomorphism $\cV_{f'}\isom s^*(\cV_f)$. In particular, the  pullback of $\cV_f$ to a geometric point is the cohomology group $H^1(C,\bZ_r)\isom \bZ_r^{\oplus 2g}$ of the corresponding genus $g$ curve $C$.  The intersection form on these cohomology groups defines a  skew-symmetric $\bZ_r$-bilinear  form $\varpi_f\colon \cV_f\times\cV_f\to \bZ_r$.
Let $V$ be the free $\bZ_r$-module on the symbols $a_1,b_1, \cdots,a_g,b_g$ equipped with the standard skew-symmetric form defined by $\varpi(a_i,a_j)=0=\varpi(b_i,b_j)$ and $\varpi(a_i,b_j)=\delta_{ij}$ for all $1\leq i,j\leq g$. A level structure on the family $f$ is then defined to be an isomorphism of $\bZ_r$-modules $\theta\colon V\to H^0(S,\cV_f)$ relating the forms $\varpi$ and $\varpi_f$.  

There is a CFG  $\cM_g^r$ whose objects are families of genus $g$ curves $f\colon \cC\to S$ equipped with level $r$ structure $\theta\colon V\to H^0(S,\cV_f)$, and whose morphisms are Cartesian diagrams \eqref{stack} satisfying $s^*(\theta)=\theta'$. The fibre functor is defined as before. There is an obvious forgetful map $\cM_g^r\to \cM_g$, and this  is an isomorphism when $r=1$.

\begin{thm}
\begin{itemize}
\item[(i)]
$\cM_g^r$ is a smooth Deligne-Mumford stack of dimension $3g-3$. 

\item[(ii)] The forgetful map $\cM_g^r\to \cM_g$ is {\'e}tale.

\item[(iii)] When $r>2$ the stack $\cM_g^r$ is representable by a quasi-projective variety.
\end{itemize}
\end{thm} 

\subsection{Stacks of quadratic differentials}

We begin with an easy general remark. Suppose that $Y$ is an algebraic stack, and $E$ is a  locally-free sheaf of finite rank over $Y$. Recall that this means that for every map $S\to Y$ with $S$ a a scheme we are given a locally-free sheaf on $S$, and these are compatible with pull-back. Consider the CFG $\cE$ whose objects  are pairs $(f,\theta)$ consisting of a morphism $f\colon S\to Y$, with $S$ a scheme, and a section $\theta\in H^0(S,f^*(E))$. Given another such pair $(f',\theta')$, a morphism $s\colon (f',\theta')\to (f,\theta)$ is a morphism of schemes $s\colon S'\to S$ such that $f'=f\circ s$ and $\theta'=s^*(\theta)$. The fibre functor is defined in the the obvious way and there is an obvious map $\pi\colon E\to Y$. The following is standard:

\begin{lemma} $\cE$ is an algebraic stack, and for each scheme $f\colon S\to Y$ the fibre product $\pi_S\colon \cE\times_Y S\to S$ is the total space of the vector bundle over $S$ defined by the locally-free sheaf $f^*(E)$. 
\end{lemma}

We now apply this to the cotangent bundle of the stack $\cM_g$. Given a family of genus $g$ curves $f\colon \cC\to S$ we denote by $\omega_{\cC/S}$ the relative cotangent bundle. Given a Cartesian diagram \eqref{stack} there is a canonical isomorphism $\omega_{\cC'/S'}\isom g^*(\omega_{\cC/S})$. Define a CFG  $\Quad_g$ whose objects are families of genus $g$ curves $f\colon \cC\to S$ equipped with a section $Q\in H^0(\cS,\omega_{\cC/S}^{\tensor 2})$. Morphisms are Cartesian diagrams \eqref{stack} such that $g^*(Q)=Q'$. There is an obvious forgetful map $q\colon \Quad_g\to \cM_g$. 

If $C$ is a  projective curve then Serre duality gives $H^1(C,\omega_C^{\tensor 2})=H^0(C,T_C)^*=0$. Using cohomology and base-change it follows that $f_*(\omega_{\cC/S}^{\tensor 2})$ is a locally-free sheaf on $S$, and
 \begin{equation}H^0(\cC,\omega_{\cC/S}^{\tensor 2})=H^0(S, f_*(\omega_{\cC/S}^{\tensor 2})).\end{equation}
 Moreover the formation of $f_*(\omega_{\cC/S}^{\tensor 2})$ commutes with base-change and hence defines a locally-free sheaf  on $\cM_g$.  Applying the above general construction gives

\begin{lemma}
$\Quad_g$ is an algebraic stack and the pullback of $q\colon \Quad_g\to \cM_g$ to any scheme $S\to \cM_g$ is a vector bundle.
\end{lemma}

Consider a family of genus $g$ curves $f\colon \cC\to S$ and a section $Q\in H^0(\cS,\omega_{\cC/S}^{\tensor 2})$.  The relative critical locus of $Q$ is a closed subscheme of $\cC$, and since $f$ is proper, its image  is a closed subscheme in $S$.  This construction commutes with base-change and hence defines a closed substack of $\Quad_g$. We define $\Quad^0_g\subset \Quad_g$ to be the complementary open substack. By definition, a geometric point of  $\Quad_g$ lies in this open substack precisely if the corresponding quadratic differential $Q\in H^0(C,\omega_C^{\tensor 2})$ has simple zeroes.

Suppose given a map $S\to  \Quad^0_g$ with $S$ a scheme. It corresponds to a family  of genus $g$ curves $f\colon \cC\to S$ equipped with a quadratic differential $Q\in H^0(S,\omega_{\cC/S}^{\tensor 2})$  with no relative critical points. We can then form a relative spectral curve $g\colon \Sigma\to S$ by writing the equation $y^2=Q$ inside the total space of the bundle $\omega_{\cC/S}$. It is easily seen that the map $g$ is smooth and proper of relative dimension 1, and the argument in the main text shows that the geometric fibres are connected and of genus $4g-3$. 

\subsection{Rigidified stacks of bundles}

We now move onto the moduli stacks appearing in the upper half of the diagram \eqref{biggy}. With the exception of $\cM(C,Q,L,\partial)$ which will be dealt with separately, they parameterise families of curves $f\colon \cC\to S$ together with a bundle $E$ on $C$, possibly equipped with extra structures such as Higgs fields and flat connections. We will discuss these moduli spaces in the next three subsections. 

Let $f\colon \cC\to S$ be a family of genus $g$ curves. There is a CFG $\Bun({\cC/S},r)$ whose objects are pairs $(m,E)$ consisting of a morphism $m\colon T\to S$ and a rank $r$ bundle $E$ on the scheme $\cC\times_S T$. Given another such pair $(m',E')$ a morphism $(m,E)\to (m',E')$ consists of  a map of schemes $t\colon T'\to T$ and an isomorphism $\theta\colon t^*(E)\to E'$. This CFG is known to be an algebraic stack.

There is another CFG $\Flat(\cC/S,r)$ where we further equip the bundles $E$ with  relative connections $\nabla\colon E\to E\tensor \Omega^1_{\cC/S}$, and insist that the morphisms $\theta\colon t^*(E)\to E'$ are flat.  And yet another $\Higgs(\cC/S,r)$ where we instead consider relative Higgs fields $\Phi\colon E\to E\tensor \Omega^1_{\cC/S}$  and insist that the morphisms $\theta$ intertwine these in the obvious way. In all cases it is known that the resulting CFG is an algebraic stack. See   for example \cite{LP}.  

Note that the automorphism group of an object  of any one of these three stacks lying over a scheme $T$ contains the  group of units $\Gamma(T,\O_T)^\times$ acting by multiples of the identity map on the bundle $E$. We would like to define  rigidified versions of the stacks in which this group of scalars is removed. The general procedure for performing such reductions is explained in \cite{ACV}. The objects of the stack remain unchanged, but the morphism spaces are quotiented by the action of the relevant subgroup. This results in presheaves of morphism spaces which must in general then be sheafified.  In the present setting this can be made quite explicit as we now explain.

For definiteness let us consider the case of the stack $\Bun(\cC/S,r)$. Let us fix two objects $(m,E)$ and $(m',E')$ of this stack as above. Consider triples $(t,\theta,L')$ consisting of a map of schemes $t\colon T'\to T$, an invertible sheaf $L$ on $T'$, and an isomorphism $\theta\colon t^*(E)\to E'\tensor f^*(L)$. Two such triples $(t,\theta_1,L_1)$ and $(t,\theta_2,L_2)$ are to be considered equivalent if there is an isomorphism $\alpha\colon L_1\to L_2$ such that $\theta_2=\theta_1\circ (1\tensor \alpha)$. The morphism spaces in the rigidified stack $\Bun^{rig}({\cC/S},r)$ are then defined to be the equivalence classes of such triples. We leave it to the reader to check that the composition maps are well-defined, and that this construction gives the sheafification of the quotients of the morphism spaces in $\Bun(\cC/S,r)$. It then follows from \cite{ACV} that $\Bun^{rig}({\cC/S},r)$ is an algebraic stack.

The construction of rigidified stacks $\Flat^{rig}(\cC/S,r)$ and $\Higgs^{rig}(\cC/S,r)$ is identical. For the case of connections one should only note  that if  $f\colon X\to Y$ is a smooth map, and $\nabla\colon E\to E\tensor \Omega_{X/Y}$ is a relative connection, then for any line bundle $L$ on $Y$ the map $\nabla\tensor f^*(L)$ defines a relative connection on the bundle $E\tensor f^*(L)$. 

\subsection{Determinants and stability}

A geometric point of $\Bun({\cC/S},r)$ consists of a geometric point of $s\to S$ and a bundle $E_s$ on the corresponding curve $\cC_s$. There is an open substack $\Bun({\cC/S},r)_0\subset \Bun({\cS/S},r)$ such that a geometric point factors through this open substack precisely if the corresponding bundle $E_s$ is stable. There is a corresponding open substack $\Bun^{rig}({\cC/S},r)_0\subset \Bun^{rig}({\cC/S},r)$ defined in the same way.

\begin{lemma}
The stack $\Bun({\cC/S},r)_0$ is representable by a   scheme over $S$. 
\end{lemma}

\begin{proof}
The functor sending a scheme $T$ to the isomorphism classes of $T$-valued points of $\Bun^{rig}_{r,d}({\cC/S})_0$ is the usual moduli functor for stable bundles. This is represented by a quasi-projective scheme. It remains to show that the groupoid of $T$-valued points of $\Bun^{rig}_{r,d}({\cC/S})_0$ is discrete. Since stable bundles have only scalar automorphisms, the push-forward $f_*\mathcal{E}nd(E)$ is a line bundle on $T$. 
\end{proof}

There is a map of stacks $S\to \Bun^{rig}({\cC/S},1)$ obtained by sending a map $m\colon T\to S$ to the pair $(m,E)$ where $E=\O_{\cC\times_S T}$ is the structure sheaf. This is a closed embedding. 
The determinant defines a map of stacks $\Bun^{rig}({\cC/S},r)\to \Bun^{rig}({\cC/S},1)$.  Taking the pullback defines a closed substack of $\Bun^{rig}({\cC/S},r)$ consisting of bundles with trivial determinant.

 A flat connection (respectively Higgs field)  on $E$ induces a flat connection (respectively Higgs field) on $\wedge^r(E)$. Moreover  the trivial line bundle $\O_{\cC}$ has a canonical connection $d$ and a canonical Higgs field $0$. Thus we obtain  stacks $\Flat^*_g(r)$ and $\Higgs^*_g(r)$.

\subsection{Universal stacks over $\cM_g$}

We can now consider the relative case. For this the following Lemma will be useful.

\begin{lemma}Suppose that $f\colon Y\to Z$ is a morphism of CFGs. Suppose that  $Z$ is an algebraic stack. Suppose also that for every morphism $S\to Z$ with $S$ a scheme, the base-change $Y_S:=Y\times_Z S$ is an algebraic stack. Then $Y$ is an algebraic stack. 
\end{lemma}

There is a CFG $\Bun_g(r)$ whose objects are families of genus $g$ curves $p\colon \cC\to S$ together with a rank $r$  locally free sheaf $E$ on $\cC$. The morphisms are diagrams \eqref{stack} together with an isomorphism  $\theta\colon s^*(E)\to E'$. There is an obvious forgetful map to $\cM_g$. Using the Lemma, to prove that $\Bun_g(r)$ an algebraic stack it is enough to check this for the base-change to each $S$-valued point $S\to \cM_g$. Such a point is a family of genus curves $p\colon \cC\to S$, and the resulting CFG is just the stack of bundles on $\cC$. The same remarks apply to the rigidified version of the stack.

Must fix determinant.

To impose trivial determinant conditions consider the map $\det\colon \Bun_g(r)\to \Bun_g(1)$ sending a bundle $E$ to the determinant $\det(E)=\wedge^r(E)$. Now take the fibre product with the map $\cM(g)\to \Bun_g(1)$ obtained by equipping a family of genus $g$ curves $\cC\to S$ with the trivial line bundle $\O_{\cC}$. We denote the resulting stack $\Bun^*_g(r)$. Unravelling we see that its the extra data of an isomorphism $\det(E)\isom \O$. Similar remarks apply to the stacks $\Flat_g(r)$ and $\Higgs_g(r)$.

We now set \begin{equation}\cM(C,E)=\Bun^*_g(2), \qquad \cM(C,E,\nabla)=\Flat_g^*(2), \qquad \cM(C,E,\Phi)=\Higgs_g^*(2),\end{equation}
and define $\cM(C,E,\nabla,\Phi)$ as the fibre product of $\Flat_g^*(2)$ and $\Higgs_g^*(2)$ over $\Bun^*_g(2)$. (Or over $\Bun_g(2)$ ??). Also let $\cM(C,Q,E,\nabla)$ be the fibre product  of $\Flat_g^*(2)\to \cM_g$ with $T^*\cM_g\to \cM_g$. Has $\det(\Phi)$ got simple zeroes?

\subsection{Anti-invariant branched connections}

Recall stack of curves with a divisor. Then  stack of line bundles with meromorphic connection with simple poles on divisor. Pullback to $\cM(C,Q)$ gives stack of branched connections. Say what it is as a CFG. Now there's a self-map   sending $L$ to $L\tensor \sigma^*(L)$. There's also a section taking $(p^*(\omega_C),\partial_{can})$. Fibre is required stack of branched connections.

Show isomorphism with $H^1(\Sigma,\bC^*)^-$.

\subsection{On the map $\alpha$}

First consider more abstract setting  of degree 2 finite morphism of schemes $p\colon \Sigma\to C$. There is a branch divisor $R$. Then everything we did before works in that setting. (But we have to consider *relative* connections). We have to allow connections with poles along $R$. Apply that to any family of quadratic differentials $S\to \cM(C,Q)$ with associated universal curve $C\to S$ and double cover $\Sigma\to C$.

The determinant isomorphism gives the anti-invariance (i.e the choice of such an isomorphism). That is there is a section of $L\tensor \sigma^*(L)$ which relates the connection on $p^*(\det(E))$ to that on $L\oplus\sigma^*(L)$. 

Then say both are generically smooth and map is generically 1:1.

\subsection{On the map $\beta$}

Take automorphism of $\cM(C,E,\nabla,\Phi)$ adding Higgs field to connection. Then $\beta_\epsilon$ is just $(C,E,\nabla,\Phi)\mapsto (C,Q,\nabla)$ and is pulled back from $(C,E,\Phi)\to (C,Q,E)$.